\numberwithin{equation}{section}
\newtheorem{dfn}{Definition}[section]
\newtheorem{propo}[dfn]{Proposition}
\newtheorem{theo}[dfn]{Theorem}
\newtheorem{cor}[dfn]{Corollary}
\newtheorem{lem}[dfn]{Lemma}
\theoremstyle{definition}
\newtheorem{nota}[dfn]{Notation}
\theoremstyle{remark}
\newtheorem{rem}[dfn]{Remark}
\newcommand{\suite}[2]{{\left(#1\right)}_{#2}}
\newcommand{\norm}[1]{\left\lVert #1 \right\rVert}
\newcommand{\ens}[1]{\left\{ #1\right\}}
\newcommand{\R}{\mathbb{R}}
\newcommand{\Z}{\mathbb{Z}}	
\newcommand{\N}{\mathbb{N}}
\newcommand{\ds}{\displaystyle}
\newcommand{\abs}[1]{\left|#1\right|}
\providecommand{\newoperator}[3]{%
  \newcommand*{#1}{\mathop{#2}#3}}
\newoperator{\re}{\mathrm{Re}}{\,}
\newoperator{\im}{\mathrm{Im}}{\,}
\begin{document}

\title{Holderian weak invariance principle for stationary mixing sequences}

\author{Davide Giraudo }

\address{Universit\'e de Rouen, LMRS, Avenue de l'Universit\'e, BP 12 76801 
Saint-\'Etienne-du-Rouvray cedex, France.}

\email{davide.giraudo1@univ-rouen.fr}

\date{\today}

\keywords{Invariance principle, mixing conditions, strictly stationary process}

\subjclass[2010]{60F05; 60F17}

 \begin{abstract}
  We provide some sufficient mixing conditions on a strictly stationary sequence 
  in order to guarantee the weak invariance principle in H\"{o}lder spaces. 
  Strong mixing, $\rho$-mixing  conditions are investigated as well as 
  $\tau$-dependent sequences. The main tools are 
  deviation inequalities for mixing sequences.
 \end{abstract}
 
 \maketitle

 \section{Introduction}
 
 \subsection{Context and notations}
 
 Let $(X_j)_{j\geqslant 0}$ be a strictly stationary sequence of real valued 
  random variables with zero mean and finite variance, and for an integer 
  $n\geqslant 1$, $S_n:=\sum_{j=1}^nX_j$ denotes the $n$-th 
  partial sum. 
  Its variance is denoted by $\sigma_n^2$. Let us consider the partial sum 
  process 
  \begin{equation}\label{polyg}
   S_n^{\mathrm{pl}}(t):=\sum_{j=1}^{[nt]}X_j+(nt-[nt])X_{[nt]+1},\quad 
   n\geqslant 1,t\in[0,1].
  \end{equation}
  We are interested in the asymptotic behavior of 
  $\sigma_n^{-1}S_n^{\mathrm{pl}}(\cdot)$ viewed as a 
  random function in some function spaces. 
  
  \begin{nota}
   If $T\colon \Omega\to\Omega$ is a bi-measurable measure preserving map, we 
   define 
   for $f\colon\Omega\to \R$ and a positive integer $n$ the $n$th partial sum 
   $S_n(f):=\sum_{j=1}^nf\circ T^j$ and $\sigma_n^2(f):=\mathbb E[S_n^2(f)]-(\mathbb 
   E[S_n(f)])^2$ denotes its variance. We shall also consider $S_n^{\mathrm{pl}}
   (f)$ defined in a similar way as in \eqref{polyg}, that is, 
   \begin{equation}\label{polyg_f}
   S_n^{\mathrm{pl}}(f,t):=S_{[nt]}(f)+ (nt-[nt])f\circ T^{[nt]+1},
   \end{equation}
   and $W_n(f,t):=n^{-1/2}S_n^{\mathrm{pl}}(f,t)$.
  \end{nota}

  In all the paper, the involved sequences of random variable are assumed 
  to be strictly stationary.
  
  When $(X_j)_{j\geqslant 0}$ is an independent identically distributed sequence, 
  Donsker showed (cf. \cite{MR0040613}) that 
  $(n^{-1/2}(\mathbb E(X_1^2))^{-1/2}S_n^{\mathrm{pl}})_{n\geqslant 1}$ 
  converges in distribution in the 
  space of continuous functions on the unit interval to a standard Brownian 
  motion $W$. 
  An intensive research has then been performed to extend this result to 
  stationary weakly dependent  sequences. 
  We refer the reader to \cite{MR2206313} for the main theorems in this area. 
  
  In this paper, we rather focus on the convergence in distribution of the 
  partial sum in other function spaces.

 \subsection{H\"older spaces}
 
 It is well known that standard Brownian motion's paths are almost surely 
  H\"older regular of exponent 
  $\alpha$ for each $\alpha\in (0,1/2)$, hence it is natural to consider the 
  random function defined in 
  \eqref{polyg_f} as an element of $\mathcal H_\alpha[0,1]$ and 
  try to establish its weak 
  convergence to a standard Brownian motion in this function space. 
  
  Before stating the results in this direction, let us 
  define for $\alpha\in (0,1)$ the H\"older space $\mathcal H_\alpha[0,1]$ of functions 
  $x\colon[0,1]\to\R$
  such that $\sup_{s\neq t}\abs{x(s)-x(t)}/\abs{s-t}^\alpha$ is finite. 
  The analogue of the continuity modulus in $C[0,1]$ is $w_\alpha$, defined by 
  \begin{equation*}
  w_\alpha(x,\delta)=\sup_{0<\abs{t-s}<\delta}\frac{\abs{x(t)-x(s)}}{\abs{t-
  s}^\alpha}.
  \end{equation*}
  We then define $\mathcal H_\alpha^0[0,1]$ by $\mathcal H_\alpha^0[0,1]:=
  \ens{x\in \mathcal H_\alpha[0,1],
  \lim_{\delta\to 
  0}w_\alpha(x,\delta)=0}$. We shall essentially work with the space 
  $\mathcal H_\alpha^0[0,1]$ which, 
  endowed with $\norm\cdot_\alpha\colon x\mapsto  w_\alpha(x,1)+\abs{x(0)}$, 
  is a separable Banach space 
  (while $\mathcal H_\alpha[0,1]$ is not). Since the canonical 
  embedding $\iota\colon \mathcal H^0_\alpha[0,1]\to \mathcal H_\alpha[0,1]$
  is continuous, each 
  convergence in distribution in $\mathcal H_\alpha^0[0,1]$ also takes place in 
  $\mathcal H_\alpha[0,1]$.

  In order to prove such a convergence, we need a tightness criterion. 
  Combining Theorem~14 in \cite{MR1736910} in the particular case 
  of the partial sum process \eqref{polyg_f} with Lemma~3.3  in  
  \cite{MR2759167}, the condition 
  \begin{equation}\label{tightness_criterion}
   \forall\varepsilon>0, \quad 
   \lim_{\delta \to 0}\limsup_{n\to \infty}\frac 1{\delta}\sum_{j=1}^{\log[n\delta]}2^j
  \mu\ens{\max_{1\leqslant k\leqslant 
  [n\delta]2^{-j}}\abs{S_k(f)}>\frac{\sigma_n}{n^\alpha}
  \varepsilon([n\delta]2^{-j})^\alpha}=0
  \end{equation}
  (where $\log$ denotes the binary logarithm)
  is sufficient for tightness of the sequence $(\sigma_n^{-1}(f)
  S_n^{\mathrm{pl}}(f))_{n\geqslant 1}$ in $\mathcal H_{\alpha}^0[0,1]$. 
  
  In the particular case of linear variance (that is, $\sigma_n^2\sim cn$ as $n\to \infty$
  for some constant $c$), we will normalize by $\sqrt n$. Using the change of indexes 
  $k=\log[n\delta]-j$ (so that $2^{-j}=2^k/[n\delta]$), 
  this leads to the following tightness criterion for 
  $(W_n(f))_{n\geqslant 1}$ in $\mathcal H_{\alpha}^0[0,1]$:
  
  \begin{equation}\label{tightness_criterion2}
   \forall\varepsilon>0, \quad 
   \lim_{\delta \to 0}\limsup_{n\to \infty}n\sum_{k=1}^{\log[n\delta]}2^{-k}
  \mu\ens{\max_{1\leqslant i\leqslant 
  2^{k}}\abs{S_i(f)}>
  \varepsilon 2^{k\alpha} n^{1/p}}=0,
  \end{equation}
  where $\alpha=1/2-1/p$. 
  
  As mentioned before, the random function defined in 
  \eqref{polyg} can be viewed 
  as an element of $\mathcal H_\alpha[0,1]$, $\alpha\in (0,1/2)$ and 
  we can try to establish the weak convergence of the sequence 
  $(\sigma_n^{-1}S_n^{\mathrm{pl}}(f))_{n\geqslant 1}$
  to a standard Brownian motion in this function space. 
  To the best of our knowledge, it seems that the study of this kind of 
  convergence was not as 
  intensive as in the space of continuous functions or the Skorohod space. 
  The first result in this direction was established by Lamperti in 
  \cite{MR0143245}: if 
  $(X_j)_{j\geqslant 0}$ is an i.i.d. sequence with $\mathbb E[X_0]=0$, $\mathbb 
  E[X_0^2]=1$ and 
  $\mathbb E\abs{X_0}^p$ is finite, then the sequence 
  $(n^{-1/2}S_n^{\mathrm{pl}})_{n\geqslant 1}$
  converges to a standard Brownian motion in $\mathcal H_{\gamma}^0[0,1]$ 
  for each $\gamma<1/2-1/p$. 
  Later, Ra\v{c}kauskas and Suquet improved this result (cf. \cite{MR2000642}), 
  showing that for an i.i.d. zero mean sequence, 
  a necessary and sufficient condition to obtain the invariance principle in 
  $\mathcal H^0_{1/2-1/p}[0,1]$ 
  is $\lim_{t\to\infty}t^p\mu\ens{\abs{X_0}>t}=0$ (in 
  \cite{MR2054586} they considered the case of more general H\"older spaces, where the role 
  of $t\mapsto t^\alpha$ is played by $t\mapsto t^\alpha L(t)$ with some 
  conditions on $L$). 
  
  Thus, establishing the weak convergence of the partial sum process in H\"older spaces 
  requires, even in the independent case, finite moment of order greater than $2$ and 
  the moment condition depends on the exponent of the considered H\"older space. It is 
  a natural question to ask about generalizations of the result by Ra\v{c}kauskas and 
  Suquet for dependent sequences. In this paper, we focus on strictly 
  stationary sequences satisfying some mixing conditions (see next section).

 \subsection{Mixing conditions}
  
  We present the mixing conditions involved in the paper.  
  
  Let $\mathcal A$ and $\mathcal B$ be two sub-$\sigma$-algebras of $\mathcal 
  F$, where $(\Omega,
  \mathcal F,\mu)$ is a probability space. We define the $\alpha$-mixing 
  coefficients as introduced by 
  Rosenblatt in \cite{MR0074711}:
  \begin{equation*}
  \alpha(\mathcal A,\mathcal B):=\sup\ens{\abs{\mu(A\cap B)-\mu(A)\mu(B)},A\in 
  \mathcal A,B\in \mathcal B}.
  \end{equation*}

  The $\rho$-mixing coefficients were introduced by Hirschfeld 
  \cite{PSP:1737020} and are defined by
  \begin{equation*}
  \rho(\mathcal A,\mathcal B):=\sup\ens{\abs{\operatorname{Corr}(f,g)},f\in 
  \mathbb L^2(\mathcal A),
  g\in  \mathbb L^2(\mathcal B),f\neq 0,g\neq 0},
  \end{equation*}
  where $\operatorname{Corr}(f,g):=\left[\mathbb E(fg)-\mathbb E(f)\mathbb 
  E(g)\right]\left[\norm{f-\mathbb 
  E(f)}_{\mathbb L^2}\norm{g-\mathbb E(g)}_{\mathbb L^2}\right]^{-1}$.

The coefficients are related by the inequalities
\begin{equation}\label{comp_mix}
4\alpha(\mathcal A,\mathcal B)\leqslant \rho(\mathcal A,\mathcal B).
\end{equation}

For a strictly stationary sequence $\suite{X_k}{k\in\Z}$ and $n\geqslant 0$ we 
define $\alpha_X(n) = 
\alpha(n) =
\alpha(\mathcal F_{-\infty}^0,\mathcal F_n^\infty)$
where $\mathcal F_u^v$ is the $\sigma$-algebra generated by $X_k$ with 
$u\leqslant k\leqslant v$ (if 
$u=-\infty$ or $v=\infty$, the corresponding
inequality is strict). In the same way we define coefficients $\rho_X(n)$.

When there will be no ambiguity, we shall simply write $\alpha(n)$ and  
$\rho(n)$. We say that the sequence $\suite{X_k}{k\in\Z}$ is $\alpha$\textit{-mixing}
if $\ds\lim_{n\to +\infty}\alpha(n)=0$, and similarily we define 
$\rho$-mixing sequences. 

$\alpha$-mixing sequences were considered in the mentioned references, 
while $\rho$-mixing sequences first appeared in \cite{MR0133175}.
 Inequality \eqref{comp_mix} translated in terms of mixing coefficients of a 
 sequence states that for each positive integer $n$,  
 \begin{equation*}
  4\alpha(n)\leqslant\rho(n).
 \end{equation*}
In particular, a $\rho$-mixing sequence is $\alpha$-mixing.

\subsection{$\tau$-dependence coefficient}

In order to define the $\tau$-dependence coefficients of a stationary sequence, 
we first need a result about conditional probability (see Theorem 33.3 of 
\cite{MR1324786}).
\begin{lem}
Let $(\Omega,\mathcal F,\mu)$ be a probability space, $\mathcal M$ a 
sub-$\sigma$-algebra of $\mathcal F$ and $X$ a real-valued random variable 
with distribution $\mu_X$. There exists a function $\mu_{X\mid \mathcal M}$ 
from $\mathcal B(\R)\times\Omega$ to $[0,1]$ such that 
\begin{enumerate}
 \item For any $\omega\in \Omega$, $\mu_{X\mid\mathcal M}(\cdot,\omega)$ is 
 a probability measure on $\mathcal B(\R)$.
 \item For any $A\in\mathcal B(\R)$, $\mu_{X\mid\mathcal M}(A,\cdot)$ is a 
 version of $\mathbb E[\mathbf 1_{\ens{X\in A}}\mid\mathcal M]$.
\end{enumerate} 
\end{lem}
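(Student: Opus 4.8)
This is the classical construction of a regular conditional distribution, and the plan is to build $\mu_{X\mid \mathcal M}$ from its conditional distribution function. First I would enumerate $\Q=\{r_1,r_2,\dots\}$ and, for each $r\in\Q$, fix a version $F(r,\cdot)$ of $\mathbb E[\mathbf 1_{\{X\leqslant r\}}\mid\mathcal M]$. Since $\mathbf 1_{\{X\leqslant r\}}\leqslant\mathbf 1_{\{X\leqslant s\}}$ almost surely whenever $r\leqslant s$, monotonicity of conditional expectation gives $F(r,\cdot)\leqslant F(s,\cdot)$ off a $\mu$-null set depending on $(r,s)$; as there are only countably many such pairs, their union is null, so there is an $\mathcal M$-measurable set $\Omega_0$ with $\mu(\Omega_0)=1$ on which $r\mapsto F(r,\omega)$ is non-decreasing. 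Shrinking $\Omega_0$ again (still with full measure), monotone convergence for conditional expectations along $r\to+\infty$ and $r\to-\infty$ lets me also assume $\lim_{r\to+\infty}F(r,\omega)=1$ and $\lim_{r\to-\infty}F(r,\omega)=0$ there.

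Next I would set, for $\omega\in\Omega_0$, $G(x,\omega):=\inf\{F(r,\omega):r\in\Q,\ r>x\}$, which is non-decreasing, right-continuous, and has limits $0$ and $1$ at $\mp\infty$, hence is a genuine distribution function; for $\omega\notin\Omega_0$ I let $G(\cdot,\omega)$ be a fixed distribution function, say that of $\mu_X$. For each fixed $\omega$ the Lebesgue--Stieltjes correspondence produces a probability measure $\mu_{X\mid\mathcal M}(\cdot,\omega)$ on $\mathcal B(\R)$ with $\mu_{X\mid\mathcal M}((-\infty,x],\omega)=G(x,\omega)$, which gives property (1). Moreover, for each fixed $x$ the map $\omega\mapsto G(x,\omega)$ is a countable infimum of the $\mathcal M$-measurable functions $F(r,\cdot)$ (modified on the null set $\Omega_0^c$), hence $\mathcal M$-measurable — this is precisely the input needed when verifying property (2).

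For property (2) I would first treat $A=(-\infty,x]$. For rational $x$ it holds by construction, the exceptional set $\Omega_0^c$ being $\mu$-null and therefore irrelevant for conditional expectations; for real $x$, take rationals $r\downarrow x$ and pass to the limit using right-continuity of $G(\cdot,\omega)$ on one side and dominated convergence for conditional expectations on the other. Then the family $\mathcal D:=\{A\in\mathcal B(\R):\mu_{X\mid\mathcal M}(A,\cdot)\text{ is a version of }\mathbb E[\mathbf 1_{\{X\in A\}}\mid\mathcal M]\}$ is a $\lambda$-system — closed under proper differences and increasing countable unions, the latter again by monotone convergence for conditional expectations — and it contains the $\pi$-system $\{(-\infty,x]:x\in\R\}$ generating $\mathcal B(\R)$, so Dynkin's theorem yields $\mathcal D=\mathcal B(\R)$.

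The only genuine obstacle is the bookkeeping of exceptional null sets: every rational level and every limiting step contributes its own $\mu$-null set, and one must keep the total collection countable so that its union stays null and can be absorbed into $\Omega_0^c$, where $G$ is defined by fiat. Everything else is a routine application of the monotone class machinery together with the elementary convergence theorems for conditional expectation; since the statement is exactly Theorem~33.3 of \cite{MR1324786}, one may alternatively simply invoke that reference.
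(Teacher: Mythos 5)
Your construction is correct and is exactly the standard argument behind Theorem~33.3 of Billingsley, which is all the paper itself offers as ``proof'' of this lemma (it is quoted there purely as a tool, with no argument given). The rational-level conditional distribution functions, the right-continuous infimum, the Lebesgue--Stieltjes step and the $\pi$--$\lambda$ argument, with careful countable bookkeeping of null sets, are precisely the intended route, so nothing further is needed.
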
 
We now introduce the $\tau$-dependence coefficients as in \cite{MR2199291}.  
We denote by $\Lambda_1(\R)$ the collection of $1$-Lipschitz functions from 
$\R$ to $\R$ and define the quantity
\[W(\mu_{X\mid\mathcal M}):=\sup\ens{
\abs{\int f(x)\mu_{X\mid\mathcal M}(\mathrm dx)-
\int f(x)\mu_X(\mathrm dx)},f\in\Lambda_1(\R)}.\]
For an integrable random variable $X$ and a sub-$\sigma$-algebra $\mathcal M$, 
the coefficient $\tau$ is defined by 
\begin{equation}
\tau(\mathcal M,X)=\norm{W(\mu_{X\mid\mathcal M})}_1.
\end{equation}
This definition can be extended to random variables with values in 
finite dimensional vector spaces. If $d$ is a positive integer, we endow 
$\R^d$ with the norm $\lVert x-y\rVert:=\sum_{j=1}^n\abs{x_j-y_j}$ and 
define $\Lambda_1(\R^d)$ as the set of $1$-Lipschitz functions from $\R^d$ 
to $\R$. 
\begin{dfn}
Let $(\Omega,\mathcal F,\mu)$ be a probability space, $\mathcal M$ a 
sub-$\sigma$-algebra of $\mathcal F$ and $X$ an $\R^d$-valued random variable. 
We define 
\begin{equation}
\tau(\mathcal M,X):=\sup\ens{\tau(\mathcal M,f(X)),f\in 
\Lambda_1(E)}.
\end{equation}
\end{dfn}
We can now introduce the $\tau$-mixing coefficient for a sequence of real-valued 
random variables.
\begin{dfn}
Let $(X_i)_{i\geqslant 1}$ be a sequence of random variables and $(\mathcal 
M_i)_{i\geqslant 1}$ a sequence of sub-$\sigma$-algebras of $\mathcal F$. For 
any positive integer $k$, define 
\begin{equation}
\tau(i):=\max_{p,l\geqslant 1}\frac 1l\sup\ens{
\tau(\mathcal M_p,(X_{j_1},\dots,X_{j_l})), p+i\leqslant j_1<\dots<j_l}. 
\end{equation}
\end{dfn}
In the sequel, we shall focus on the case $\mathcal M_i:=\sigma(X_k,
k\leqslant i)$. 
\begin{nota}\label{notation_inverse}
 Let $X\colon\Omega\to\R$ be a random variable. We denote $Q_X(\cdot)$ 
 the inverse function defined by 
 $Q_X(u):=\inf\ens{t,\mu\ens{\abs X>t}\leqslant u}$. If  $(f\circ T^j )_{j
 \geqslant 0}$ is a strictly stationary sequence and 
 $(\alpha(n))_{n\geqslant 1}$ is its sequence of 
 $\alpha$-mixing coefficients, we denote by $\alpha^{-1}(u)$ the number of indices $n$ 
 for which $\alpha(n)\geqslant u$. More generally, if $(\delta_i)_{i\geqslant 
 0}$ is a non-increasing sequence of non-negative numbers, we define 
 $\delta^{-1}(u):=\inf\ens{k\in \N,\delta_k\leqslant u}$.
 \end{nota}
 
We can compare the $\tau$-dependence coefficient with the $\alpha$-mixing 
coefficients. The following is a simplified version of Lemma~7 of 
\cite{MR2105738}.
\begin{lem}\label{Lemma_comparison_tau_alpha}
Let $(f\circ T^j)_{j\geqslant 0}$ be a strictly stationary sequence. Then for each 
integer $i$, the following inequality holds:
\begin{equation}\label{comparison_tau_alpha}
\tau(i)\leqslant 2\int_0^{2\alpha(i)}Q_{f}(u)\mathrm du.
\end{equation}
\end{lem}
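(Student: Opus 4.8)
The plan is to reduce the statement to a one--dimensional estimate and then combine the Kantorovich--Rubinstein duality with the very definition of the $\alpha$--mixing coefficients. Fix $i$, let $p,l\geqslant 1$ and $p+i\leqslant j_1<\dots<j_l$, and recall that $\tau(\mathcal M_p,(X_{j_1},\dots,X_{j_l}))$ is the supremum of $\tau(\mathcal M_p,g(X_{j_1},\dots,X_{j_l}))$ over $g\in\Lambda_1(\R^l)$. Since replacing $g$ by $g-g(0)$ translates both the conditional and the unconditional law of $g(X_{j_1},\dots,X_{j_l})$ by the same amount, and the Wasserstein distance is translation invariant, I may assume $g(0)=0$. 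Write $Y:=g(X_{j_1},\dots,X_{j_l})$ and $\mathcal M:=\mathcal M_p$. As $\sigma(Y)\subseteq\mathcal F_{j_1}^{j_l}\subseteq\mathcal F_{p+i}^\infty$, strict stationarity gives $\alpha(\mathcal M,\sigma(Y))\leqslant\alpha(i)$.

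First I would establish the identity
\[\tau(\mathcal M,Y)=\int_{\R}\mathbb E\left[\,\abs{\,\mathbb E[\mathbf 1_{\ens{Y\leqslant t}}\mid\mathcal M]-\mu\ens{Y\leqslant t}\,}\,\right]\dd t .\]
This follows because, for each $\omega$, $W(\mu_{Y\mid\mathcal M})(\omega)$ is by Kantorovich--Rubinstein duality the Wasserstein--$1$ distance between $\mu_{Y\mid\mathcal M}(\cdot,\omega)$ and $\mu_Y$, which for probability measures on $\R$ equals $\int_\R\abs{\mu_{Y\mid\mathcal M}((-\infty,t],\omega)-\mu\ens{Y\leqslant t}}\dd t$; one then integrates in $\omega$ and applies Tonelli, using that $\mu_{Y\mid\mathcal M}((-\infty,t],\cdot)$ is a version of $\mathbb E[\mathbf 1_{\ens{Y\leqslant t}}\mid\mathcal M]$. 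The measurability in $\omega$ of $W(\mu_{Y\mid\mathcal M})$ and the applicability of Tonelli should be checked here, most easily via the distribution--function representation just mentioned.

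Next, for fixed $t$ set $Z_t:=\mathbb E[\mathbf 1_{\ens{Y\leqslant t}}\mid\mathcal M]-\mu\ens{Y\leqslant t}$; then $\mathbb E[Z_t]=0$, so $\mathbb E\abs{Z_t}=2\mathbb E[Z_t^+]$, and taking $A:=\ens{Z_t>0}\in\mathcal M$ yields $\mathbb E[Z_t^+]=\mu(A\cap\ens{Y\leqslant t})-\mu(A)\mu\ens{Y\leqslant t}$, which is bounded by $\alpha(\mathcal M,\sigma(Y))\leqslant\alpha(i)$, and also (since $Z_t^+\leqslant\mathbb E[\mathbf 1_{\ens{Y\leqslant t}}\mid\mathcal M]$ and $Z_t^+\leqslant\mu\ens{Y>t}$) by $\mu\ens{Y\leqslant t}$ and by $\mu\ens{Y>t}$. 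Hence $\mathbb E\abs{Z_t}\leqslant 2\min\pa{\alpha(i),\mu\ens{Y\leqslant t},\mu\ens{Y>t}}$. Splitting $\int_\R$ at $0$, bounding the integrand by $2\min(\alpha(i),\mu\ens{Y>t})$ for $t\geqslant 0$ and by $2\min(\alpha(i),\mu\ens{Y\leqslant t})$ for $t<0$ (the latter integrating, after $t\mapsto -t$, to the same value as with $\mu\ens{Y<-t}$, since $Y$ has at most countably many atoms), then invoking the elementary inequality $\min(a,b)+\min(a,c)\leqslant\min(2a,b+c)$, the identity $\mu\ens{Y>t}+\mu\ens{Y<-t}=\mu\ens{\abs{Y}>t}$, and the layer-cake formula $\int_0^\infty\min(2a,\mu\ens{\abs{Y}>t})\dd t=\int_0^{2a}Q_{\abs{Y}}(u)\dd u$, one arrives at $\tau(\mathcal M,Y)\leqslant 2\int_0^{2\alpha(i)}Q_{\abs{Y}}(u)\dd u$.

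Finally, from $g(0)=0$ and the $\ell^1$--Lipschitz property one has $\abs{Y}\leqslant\sum_{k=1}^l\abs{X_{j_k}}$, hence $Q_{\abs{Y}}\leqslant Q_{\sum_k\abs{X_{j_k}}}$; the classical inequality $\int_0^aQ_{Z_1+\dots+Z_l}(u)\dd u\leqslant\sum_{k=1}^l\int_0^aQ_{Z_k}(u)\dd u$ for nonnegative random variables, together with $Q_{\abs{X_{j_k}}}=Q_f$ by strict stationarity, then gives $\tau(\mathcal M_p,Y)\leqslant 2l\int_0^{2\alpha(i)}Q_f(u)\dd u$. Dividing by $l$ and taking the suprema over $g$, over the indices $j_1<\dots<j_l$, and over $p$ and $l$ yields the claim. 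I expect the genuinely delicate point to be the sharp recombination in the previous paragraph: a careless estimate only produces $\tau(\mathcal M,Y)\leqslant 4\int_0^{\alpha(i)}Q_{\abs{Y}}(u)\dd u$, which is strictly weaker than the asserted bound, so the positive and negative tails of $Y$ must be kept apart until the final $\min(a,b)+\min(a,c)\leqslant\min(2a,b+c)$ step; the measurability issue in the identity of the second paragraph is a second, more routine, technical point.
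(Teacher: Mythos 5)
Your proof is correct. The paper itself gives no argument for this lemma --- it simply cites Lemma~7 of Dedecker and Prieur \cite{MR2105738} --- and what you have written is essentially a faithful reconstruction of the proof from that reference: reduction to dimension one via the definition of $\tau(\mathcal M,X)$ for vector-valued $X$, the representation of the Wasserstein distance on $\R$ as $\int_\R\abs{F_{\mu}-F_{\nu}}\,\mathrm dt$, the Rio-type bound $\mathbb E\abs{Z_t}\leqslant 2\min\pa{\alpha,\mu\ens{Y\leqslant t},\mu\ens{Y>t}}$ obtained by testing against the $\mathcal M$-measurable set $\ens{Z_t>0}$, and the subadditivity of $a\mapsto\int_0^aQ(u)\,\mathrm du$ under sums of nonnegative variables. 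You correctly identify and resolve the one genuinely sharp point, namely keeping the two tails of $Y$ separate until the step $\min(a,b)+\min(a,c)\leqslant\min(2a,b+c)$, which is exactly what yields the constant $2\int_0^{2\alpha(i)}$ rather than the weaker $4\int_0^{\alpha(i)}$.
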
 
In \cite{MR2105738}, "Application~1: causal linear processes" (p. 871), 
Dedecker and Prieur provide an example of a process whose 
$\tau$-dependence coefficients converge to $0$ as fast as $2^{-i}$ but 
$\alpha_i=1/4$ for each positive integer $i$.

 \section{Main results}    
  
  \subsection{Mixing sequences}
  
  In this subsection, we give sufficient mixing conditions which guarantee 
  the convergence of the sequence $(W_n(f))_{n\geqslant 1}$ to 
  a Brownian motion in the space $\mathcal H^0_{1/2-1/p}[0,1]$, $p>2$.
  
  We refer the reader to Notations~\ref{notation_inverse} and~\ref{notation_G}.
  \begin{theo}\label{theorem_tau_dependent}
    Let $p>2$ and let
    $(f\circ T^j)_{j\geqslant 0}$ be a strictly stationary centered sequence such that 
   \begin{equation}\label{condition_tau_dependence}
    \lim_{t\to \infty}t^{p-1}\int_0^1Q_f(u)
    \mathbf 1\ens{\left(\left(\tau/2\right)^{-1}\circ G_{f}^{-1}
  \right)(u)Q_f(u)>t}\mathrm du=0.
   \end{equation}
  Then  
  \begin{equation}\label{convergence_tau}
  W_n(f)\to \sigma(f) W
  \mbox{ in distribution in }\mathcal H_{1/2-1/p}^0[0,1],
  \end{equation}
  where $\sigma^2(f)=\operatorname{Var}(f)+2\sum_{k=1}^\infty
  \operatorname{Cov}(f,f\circ T^k)$.
  \end{theo}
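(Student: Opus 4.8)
The plan is to establish the two standard ingredients of a weak invariance principle: convergence of finite-dimensional distributions, and tightness in $\mathcal H^0_{1/2-1/p}[0,1]$. For the finite-dimensional distributions, the condition \eqref{condition_tau_dependence} certainly forces $\tau(i)\to 0$ (take $t$ large), hence the sequence is $\tau$-dependent, and in particular, via Lemma~\ref{Lemma_comparison_tau_alpha} read backwards together with the moment control implicit in \eqref{condition_tau_dependence}, one gets enough decay to invoke a central limit theorem for $\tau$-dependent sequences (the relevant CLT of Dedecker--Prieur in \cite{MR2105738}, or equivalently Maxwell--Woodroofe type conditions); this yields $n^{-1/2}S_n(f)\to\mathcal N(0,\sigma^2(f))$ with the series $\sigma^2(f)$ absolutely convergent, and the multidimensional version by the Cram\'er--Wold device applied to increments. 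In particular one checks that $\sigma_n^2(f)\sim\sigma^2(f)n$, so the normalization by $\sqrt n$ is the correct one and the tightness criterion \eqref{tightness_criterion2} is the one to verify.

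The core of the argument is therefore tightness, i.e. verifying
\begin{equation*}
\forall\varepsilon>0,\quad \lim_{\delta\to 0}\limsup_{n\to\infty}n\sum_{k=1}^{\log[n\delta]}2^{-k}\mu\ens{\max_{1\leqslant i\leqslant 2^k}\abs{S_i(f)}>\varepsilon 2^{k\alpha}n^{1/p}}=0,
\end{equation*}
with $\alpha=1/2-1/p$. First I would pass from the maximum of partial sums to a single partial sum by a maximal inequality adapted to $\tau$-dependent sequences (a Rosenthal-type or Doob-type bound, using the mixing decay to control the blocks), so that $\mu\{\max_{i\leqslant 2^k}\abs{S_i(f)}>x\}$ is dominated by $C\,\mu\{\abs{S_{2^k}(f)}>cx\}$ up to a harmless constant, or more safely keep the maximum and apply directly a deviation inequality for $\max_{i\leqslant m}\abs{S_i(f)}$ of the form announced in the abstract ("deviation inequalities for mixing sequences"). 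Such an inequality typically splits into a Gaussian-type term, controlled by $\exp(-cx^2/(m\,\|f\|_\infty^2))$ after truncation, and a "large jump" term of the form $m\,\mu\{Q_f^{\mathrm{eff}}(\cdots)>x\}$ where the effective quantile incorporates the composition $(\tau/2)^{-1}\circ G_f^{-1}$ appearing in \eqref{condition_tau_dependence}. The point of the hypothesis \eqref{condition_tau_dependence} is exactly that, when one plugs $m=2^k$ and $x=\varepsilon 2^{k\alpha}n^{1/p}$ and sums $n\sum_k 2^{-k}(\cdots)$, the large-jump contribution is bounded by the quantity whose limit is $0$ in \eqref{condition_tau_dependence}, after the change of variables $u\mapsto$ (tail level) and the identification $2^{k\alpha}n^{1/p}\asymp$ threshold; the Gaussian term is summable in $k$ and vanishes as $\delta\to 0$ by an elementary estimate (this is where the strict inequality $p>2$, i.e. $\alpha<1/2$, is used, so that $2^{k\alpha}$ grows slower than $2^{k/2}$).

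I expect the main obstacle to be the bookkeeping in the truncation argument: one must choose the truncation level (as a function of $k$, $n$, and the quantile function $Q_f$) so that simultaneously the truncated part obeys the exponential bound with a genuinely summable exponent and the tail part is exactly matched to the integral in \eqref{condition_tau_dependence}. This requires running the deviation inequality with a level depending on the index $j$ of the dyadic block, in the spirit of Merlev\`ede--Peligrad and of Ra\v{c}kauskas--Suquet's treatment of the i.i.d. case, and then performing the discrete-to-continuous passage $\sum_j 2^j \mapsto \int$ carefully. A secondary technical point is to verify the hypotheses of the chosen deviation inequality (e.g.\ that $\sum \tau(i)<\infty$ or a polynomial rate, which follows from \eqref{condition_tau_dependence} by the same take-$t$-large argument), and to confirm that the constant $c$ in $\mu\{\abs{S_{2^k}(f)}>cx\}$ does not degrade when $k$ ranges up to $\log[n\delta]$. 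Once tightness is in hand, \eqref{convergence_tau} follows by combining it with the finite-dimensional convergence and the identification of the limit as $\sigma(f)W$ in $\mathcal H^0_{1/2-1/p}[0,1]$ via the tightness criterion \eqref{tightness_criterion2} quoted above.
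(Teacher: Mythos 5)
Your plan follows essentially the same route as the paper: finite-dimensional convergence via the Dedecker--Prieur/Dedecker--Rio machinery for $\tau$-dependent sequences (the paper verifies the projective criterion through Corollary~1 of \cite{MR1983043}, noting that \eqref{condition_tau_dependence} implies condition~(5.5) of \cite{MR2105738} and hence $\sigma_n^2(f)/n\to\sigma^2(f)$), and tightness by plugging the Fuk--Nagaev inequality for $\tau$-dependent maxima (Theorem~\ref{Theorem_tail_inequality_tau_dep}, i.e.\ Theorem~2 of \cite{MR2105738}) into criterion \eqref{tightness_criterion2} with $N=2^k$, $\lambda=\varepsilon 2^{k\alpha}n^{1/p}$ and $r>p$, the large-jump term being exactly matched by \eqref{condition_tau_dependence}. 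The only cosmetic difference is that no truncation bookkeeping is needed: the quoted inequality already controls the maximum and its first term is polynomial, $4(1+\lambda^2/(rs_N^2(f)))^{-r/2}$, which after the variance bound \eqref{control_variance_tau} yields $K\delta^{r/p-1}\to 0$ because $r>p$ (the sum over $k$ is dominated by its top term rather than being summable).
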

 Using the comparison between $\alpha$ and $\tau$, we can deduce a condition in 
 the spirit of that of Doukhan, Massart and Rio (see \cite{MR1324814}). One 
 can also derive it from the tightness criterion \eqref{tightness_criterion2} and 
 Theorem 6.2 of \cite{MR2117923}.
  \begin{cor}\label{corollary_alpha_mixing}
    Let $p>2$ and let $(f\circ T^j)_{j\geqslant 0}$ be a strictly 
    stationary centered sequence such that 
   \begin{equation}\label{condition_alpha}
    \lim_{t\to \infty}t^{p-1}\int_0^1Q_f(u)
    \mathbf 1\ens{\alpha^{-1}(u)Q_{f}(u)>t}\mathrm du=0.
   \end{equation}
  Then  
  \begin{equation}\label{convergence_alpha}
  W_n(f)\to \sigma(f) W
  \mbox{ in distribution in }\mathcal H_{1/2-1/p}^0[0,1],
  \end{equation}
  where $\sigma^2(f)=\lim_{n\to\infty}\sigma_n^2(f)/n$.
  \end{cor}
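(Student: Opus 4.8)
The plan is to derive Corollary~\ref{corollary_alpha_mixing} from Theorem~\ref{theorem_tau_dependent} by showing that the $\alpha$-mixing condition \eqref{condition_alpha} implies the $\tau$-dependence condition \eqref{condition_tau_dependence}. The key quantitative input is Lemma~\ref{Lemma_comparison_tau_alpha}, which gives $\tau(i)\leqslant 2\int_0^{2\alpha(i)}Q_f(u)\,\mathrm du$. First I would use this to compare the generalized inverses appearing in the two conditions. Writing $\beta(i):=2\int_0^{2\alpha(i)}Q_f(u)\,\mathrm du$, Lemma~\ref{Lemma_comparison_tau_alpha} says $\tau(i)\leqslant\beta(i)$, hence $(\tau/2)^{-1}(v)\leqslant(\beta/2)^{-1}(v)$ for every $v>0$ by monotonicity of the inverse operation (a smaller non-increasing sequence reaches a given level at least as early). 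The point of introducing $G_f$ in the statement of Theorem~\ref{theorem_tau_dependent} (via Notation~\ref{notation_G}, which I assume provides $G_f$ as essentially $u\mapsto\int_0^uQ_f$ or a closely related quantile-integral) is precisely to absorb the integral $\int_0^{2\alpha(i)}Q_f$: one should have, up to the constants built into $G_f$, the identity or inequality $(\beta/2)^{-1}\circ G_f^{-1}(u)\leqslant\alpha^{-1}(cu)$ for a suitable constant, so that $\bigl((\tau/2)^{-1}\circ G_f^{-1}\bigr)(u)\leqslant\alpha^{-1}(u)$ after harmless rescaling.

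Once this pointwise domination of the weight functions is established, the implication \eqref{condition_alpha}$\Rightarrow$\eqref{condition_tau_dependence} is essentially monotonicity of the integrand: on the event $\{((\tau/2)^{-1}\circ G_f^{-1})(u)Q_f(u)>t\}$ we have $\alpha^{-1}(u)Q_f(u)>t$ as well (possibly after adjusting $t$ by a fixed multiplicative constant, which does not affect the limit $t\to\infty$), so
\[
t^{p-1}\int_0^1 Q_f(u)\mathbf 1\ens{\bigl((\tau/2)^{-1}\circ G_f^{-1}\bigr)(u)Q_f(u)>t}\mathrm du
\leqslant
C\,t^{p-1}\int_0^1 Q_f(u)\mathbf 1\ens{\alpha^{-1}(u)Q_f(u)>ct}\mathrm du,
\]
and the right-hand side tends to $0$ by hypothesis. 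This gives \eqref{condition_tau_dependence}, so Theorem~\ref{theorem_tau_dependent} applies and yields the convergence \eqref{convergence_alpha} in $\mathcal H^0_{1/2-1/p}[0,1]$ with limit $\sigma(f)W$.

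It remains to identify the limiting variance: the theorem produces $\sigma^2(f)=\operatorname{Var}(f)+2\sum_{k\geqslant 1}\operatorname{Cov}(f,f\circ T^k)$, while the corollary states $\sigma^2(f)=\lim_{n\to\infty}\sigma_n^2(f)/n$. These two expressions agree whenever the covariance series is absolutely summable, which follows from condition \eqref{condition_alpha}: that condition forces in particular $\int_0^1\alpha^{-1}(u)Q_f(u)^2\,\mathrm du<\infty$ (it is stronger, carrying the extra factor $t^{p-1}$ with $p>2$), and by the classical covariance inequality $\abs{\operatorname{Cov}(f,f\circ T^k)}\leqslant 2\int_0^{2\alpha(k)}Q_f(u)^2\,\mathrm du$ (Rio's inequality) one gets $\sum_k\abs{\operatorname{Cov}(f,f\circ T^k)}\leqslant 2\int_0^1\alpha^{-1}(u)Q_f(u)^2\,\mathrm du<\infty$; then a standard computation shows $\sigma_n^2(f)/n\to\operatorname{Var}(f)+2\sum_{k\geqslant 1}\operatorname{Cov}(f,f\circ T^k)$.

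The main obstacle I anticipate is the bookkeeping with the generalized inverses and the function $G_f$: getting the precise inequality relating $(\tau/2)^{-1}\circ G_f^{-1}$ to $\alpha^{-1}$ requires being careful about which quantity $G_f$ denotes and about the constants $2$ appearing in Lemma~\ref{Lemma_comparison_tau_alpha} and in the coefficient $\alpha^{-1}(u)$ = "number of $n$ with $\alpha(n)\geqslant u$" versus the $\inf$-type definition of $\delta^{-1}$. All of this is routine but must be done with care so that the final constant $c$ is genuinely fixed and independent of $t$; everything else is monotonicity and a citation of Rio's covariance inequality.
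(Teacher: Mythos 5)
Your proposal is correct and follows essentially the same route as the paper: derive the corollary from Theorem~\ref{theorem_tau_dependent} by using Lemma~\ref{Lemma_comparison_tau_alpha} to show $(\tau/2)^{-1}\circ G_f^{-1}(u)\leqslant\alpha^{-1}(u/2)$ (since $G_f^{-1}(x)=\int_0^xQ_f$, the constant $c$ you anticipate is exactly $1/2$, absorbed by replacing $Q_f(u)$ with $Q_f(u/2)$ rather than by rescaling $t$), whence monotonicity gives the inclusion of events and condition \eqref{condition_alpha} implies \eqref{condition_tau_dependence}. The bookkeeping you flag as the main obstacle does work out exactly as you sketch, and your extra paragraph identifying the two expressions for $\sigma^2(f)$ via Rio's covariance inequality is a point the paper leaves implicit.
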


  \begin{rem}\label{iid_case}
   Assume that the sequence $(f\circ T^j)_{j\geqslant 0}$ is 
   independent and that $t^p\mu\ens{\abs f>t}\to 0$. Then 
   the condition of Theorem~\ref{theorem_tau_dependent} is 
   satisfied. Indeed, since $Q_f(U)$ is distributed as $\abs f$ if 
   $U$ is uniformly distributed on $[0,1]$, both 
   conditions \eqref{convergence_tau} and 
   \eqref{convergence_alpha} read 
   \begin{equation}\label{condition_independent}
     \lim_{t\to \infty}t^{p-1}\mathbb E\left[\abs f\mathbf 1\ens{\abs f>t}\right]=0.
   \end{equation}
  As 
  \begin{align*}
   t^{p-1}\mathbb E\left[\abs f\mathbf 1\ens{\abs f>t}\right]&=
   t^{p-1}\int_0^\infty\mu\ens{\abs f>\max\ens{u,t}}\mathrm du\\
   &=t^{p}\mu\ens{\abs f>t}+t^{p-1}\int_t^\infty\mu\ens{\abs f
   >u}\mathrm du\\
   &\leqslant t^{p}\mu\ens{\abs f>t}+\sup_{s\geqslant t}s^p
   \mu\ens{\abs f>s}/(p-1),
  \end{align*}
  condition \eqref{condition_independent} is satisfied hence we can derive 
  the result by Ra\v{c}kauskas and Suquet in the i.i.d. case from 
  Theorem~\ref{theorem_tau_dependent}.This contrasts with Theorem~17 
  of \cite{MR1759810}, from which we can 
  only deduce the result by Lamperti (cf. \cite{MR0143245}) in the 
  i.i.d. case.
  \end{rem}

  \begin{rem}
   Assume that $Q_f(u)\leqslant Cu^{-1/a}$ for some $a>p$ (this is the case 
   if $f$ admits a finite weak moment of order $a$). If $\alpha(k)=o(k^{-a(p-1)/(a-p)})$ or 
   $\tau(k)=o(k^{-(a-1)(p-1)/(a-p)})$, then condition \eqref{condition_tau_dependence} 
   holds. If $f$ is bounded, these sufficient conditions can be weakened respectively to 
   $\alpha(k)=o(k^{-(p-1)})$ and $\tau(k)=o(k^{-(p-1)})$.
  \end{rem}

  We conclude this subsection by a result on $\rho$-mixing sequences.
  
  \begin{theo}\label{theorem_rho_mixing}
   Let $p>2$ and let $(f\circ T^j)_{j\geqslant 0}$ be a strictly stationary
   centered sequence such that 
   $t^p\mu\ens{\abs f>t}\to 0$ as $t\to +\infty$ and $\sum_{i=0}^{\infty}\rho(2^i)<\infty$. Then  
  \begin{equation}\label{convergence_rho}
  W_n(f)\to \sigma(f) W
  \mbox{ in distribution in }\mathcal H_{1/2-1/p}^0[0,1],
  \end{equation}
  where $\sigma^2(f)=\lim_{n\to\infty}\sigma_n^2(f)/n$.
  \end{theo}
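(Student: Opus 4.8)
The plan is to establish two things: convergence of finite-dimensional distributions of $W_n(f)$ to those of $\sigma(f)W$, and tightness of the sequence $(W_n(f))_{n\geqslant 1}$ in $\mathcal H^0_{1/2-1/p}[0,1]$ via the criterion \eqref{tightness_criterion2}. The finite-dimensional convergence, together with the identification $\sigma^2(f)=\lim_n\sigma_n^2(f)/n$, is classical under $\rho$-mixing with $\sum_i\rho(2^i)<\infty$: indeed $\sum_i\rho(2^i)<\infty$ implies $\sum_n\rho(n)/n<\infty$, which is more than enough for the CLT for $\rho$-mixing sequences (Ibragimov), and the linear growth of the variance together with $\sigma^2(f)>0$ follows from the same summability condition; one invokes the Cramér--Wold device on increments over disjoint intervals. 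So the real work is tightness.

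For tightness I would verify \eqref{tightness_criterion2}: for every $\eps>0$,
\[
\lim_{\delta\to 0}\limsup_{n\to\infty} n\sum_{k=1}^{\log[n\delta]}2^{-k}
\mu\ens{\max_{1\leqslant i\leqslant 2^k}\abs{S_i(f)}>\eps\,2^{k\alpha}n^{1/p}}=0,
\]
with $\alpha=1/2-1/p$. The key tool is a maximal deviation inequality for $\rho$-mixing sequences under the condition $\sum_i\rho(2^i)<\infty$: there should exist a Rosenthal-type bound of the form
\[
\mu\ens{\max_{1\leqslant i\leqslant 2^k}\abs{S_i(f)}>\lambda}
\leqslant C\Big(\frac{2^k}{\lambda^2}\,\|f\|_2^2 \ \vee\ \text{(weak-moment term in $\lambda$)}\Big),
\]
more precisely one splits $f=f\I\{\abs f\leqslant \lambda\}+f\I\{\abs f>\lambda\}$ at the threshold $\lambda=\eps\,2^{k\alpha}n^{1/p}$, controls the truncated part by a second-moment (Doob-type) maximal inequality valid under $\sum\rho(2^i)<\infty$ — this gives a term $\lesssim 2^k\lambda^{-2}\esper{f^2\I\{\abs f\leqslant\lambda\}}$ — and controls the unbounded part crudely by $\max_i\abs{S_i}\leqslant \sum_{i\leqslant 2^k}\abs{f\circ T^i}\I\{\abs f>\lambda\}$, giving a term $\lesssim 2^k\lambda^{-1}\esper{\abs f\I\{\abs f>\lambda\}}$. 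Plugging $\lambda=\eps 2^{k\alpha}n^{1/p}$ and summing over $k$, the bounded part contributes $n\sum_k 2^{-k}\cdot 2^k\cdot(\eps 2^{k\alpha}n^{1/p})^{-2}\esper{f^2\I\{\abs f\leqslant\lambda\}}=\eps^{-2}n^{1-2/p}\sum_k 2^{-2k\alpha}\esper{f^2\I\{\cdot\}}$; here one uses $t^p\mu\{\abs f>t\}\to 0$ to see $\esper{f^2\I\{\abs f\leqslant t\}}=o(t^{2-p})$ as $t\to\infty$, so the $k$-sum is dominated by its largest term $k\approx\log[n\delta]$, where $\lambda\approx \eps(n\delta)^\alpha n^{1/p}=\eps\delta^\alpha n^{1/2}$, and the whole bounded contribution becomes $O(\eps^{-p}\delta^{\text{(positive power)}})\to 0$ as $\delta\to 0$. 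The unbounded part is handled the same way, using the computation already carried out in Remark~\ref{iid_case} showing $t^{p-1}\esper{\abs f\I\{\abs f>t\}}\to 0$.

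The main obstacle is the first of these two truncation estimates: one needs a genuine \emph{maximal} moment inequality $\esper{\max_{i\leqslant m}S_i^2}\leqslant C\,m\,\|f\|_2^2$ (or a weak-type $(2,2)$ maximal inequality) valid for $\rho$-mixing sequences whenever $\sum_i\rho(2^i)<\infty$; the dyadic summability of $\rho$ is exactly what is needed to run the Móricz/Serfling chaining argument that upgrades a second-moment bound on $S_m$ to one on $\max_{i\leqslant m}\abs{S_i}$, and to control the variance linearly. If such an inequality is not directly available in the literature in the precise form needed, one can instead use Utev-type inequalities for $\rho$-mixing sums or the blocking technique (splitting $\{1,\dots,2^k\}$ into $2^j$-blocks and iterating over the dyadic levels), which is where the hypothesis $\sum_i\rho(2^i)<\infty$ rather than $\sum_n\rho(n)/n<\infty$ really earns its keep. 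Once the maximal inequality is in hand, the rest is the routine truncation-and-summation bookkeeping sketched above, essentially parallel to the proof of Theorem~\ref{theorem_tau_dependent} but with the $\tau$-deviation inequality replaced by its $\rho$-mixing analogue.
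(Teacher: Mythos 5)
Your reduction to the tightness criterion \eqref{tightness_criterion2} and your treatment of the finite-dimensional distributions (and of the ``unbounded part'' of the truncation, via $t^{p-1}\esper{\abs f\I\ens{\abs f>t}}\to 0$) match the paper. But the core of your tightness argument has a genuine gap: a second-moment (weak-type $(2,2)$) maximal inequality for the truncated variables cannot close the estimate at the critical exponent $\alpha=1/2-1/p$. Your own bookkeeping shows why: the bounded part contributes
\begin{equation*}
n\sum_{k=1}^{\log[n\delta]}2^{-k}\cdot C\,2^{k}\lambda_k^{-2}\,\esper{f^2\I\ens{\abs f\leqslant \lambda_k}}
= C\eps^{-2}\,n^{1-2/p}\sum_{k=1}^{\log[n\delta]}2^{-2k\alpha}\,\esper{f^2\I\ens{\abs f\leqslant\lambda_k}},
\end{equation*}
and since $p>2$ the weak moment condition gives $f\in\mathbb L^2$, so $\esper{f^2\I\ens{\abs f\leqslant t}}\to\esper{f^2}>0$; the $k$-sum is bounded and the prefactor $n^{1-2/p}$ diverges. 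The estimate you invoke to save this, $\esper{f^2\I\ens{\abs f\leqslant t}}=o(t^{2-p})$, is false for $p>2$ (it is the standard estimate in the regime $p<2$; for $p>2$ the correct statements are $\esper{\abs f^q\I\ens{\abs f\leqslant t}}=o(t^{q-p})$ only for $q>p$). This failure is not an accident of the method: the paper's own counter-example (Theorem~\ref{counter_example} and Remark~\ref{other_methods}) shows that even boundedness of $\esper{\abs{S_n(f)}^p}/n^{p/2}$ does not yield tightness in $\mathcal H_{1/2-1/p}[0,1]$, so a fortiori no order-$2$ moment bound on the maximum can.

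What is actually needed, and what the paper uses, is a Fuk--Nagaev inequality with a moment order $q>p$, namely Shao's inequality (Theorem~\ref{tail_inequality_Shao}): one applies it with $N=2^k$, $x=\eps 2^{k\alpha}n^{1/p}$ and a \emph{separate} truncation level $A=\eta 2^{k\alpha}n^{1/p}$. The Gaussian-type term then contributes $\eps^{-q}n^{1-q/p}\sum_k 2^{k(q/p-1)}\norm f_2^q\approx \eps^{-q}\delta^{q/p-1}\norm f_2^q\to 0$ as $\delta\to 0$ precisely because $q>p$ makes the $k$-sum dominated by its top term; the truncated $q$-th moment term is controlled by $\norm{f\I\ens{\abs f\leqslant A}}_q^q\leqslant CA^{q-p}$, producing a factor $\eta^{q-p}$ that is made small at the end; and the term $N\mu\ens{\abs f\geqslant A}$ is handled by the weak moment hypothesis (this replaces your $\mathbb L^1$ bound on the unbounded part). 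If you want to repair your write-up, replace the Doob-type $\mathbb L^2$ maximal inequality by Shao's inequality with $q>p$ and introduce the auxiliary parameter $\eta$; the condition $\sum_i\rho(2^i)<\infty$ enters exactly through the constants $\exp\bigl(K\sum_i\rho(2^i)\bigr)$ and $\exp\bigl(K\sum_{i\leqslant k}\rho^{2/q}(2^i)\bigr)$ in that inequality.
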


  \subsection{A counter-example}
  
  In this subsection, we show that boundedness of the sequence of $p$th moments
  of the normalized partial sums is not enough to guarantee tightness in $\mathcal H_{1/2-1/p}[0,1]$.

  \begin{theo}\label{counter_example}
   Let $p>2$. There exists a strictly stationary sequence $(f\circ T^j)_{j\geqslant 0}$ 
   such that 
   \begin{itemize}
    \item the finite dimensional distributions of $(W_n(f))_{n\geqslant 1}$ 
    converge to those of a standard Brownian motion,
    \item the sequence $(\mathbb E\abs{S_n(f)}^p/n^{p/2})_{n\geqslant 1}$  is bounded and 
    \item the process $(W_n(f))_{n\geqslant 1}$ is not tight in 
    $\mathcal H_{1/2-1/p}[0,1]$.
   \end{itemize}
  \end{theo}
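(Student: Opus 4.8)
The plan is to superimpose on an i.i.d.\ background of unit variance a sparse, deterministic family of ``up--then--down'' excursion blocks living at scales growing faster than any geometric progression; the background produces the Brownian limit of the finite-dimensional distributions, while the excursions — invisible to $\mathbb E\abs{S_n(f)}^p$ but steep on short intervals — destroy tightness in $\mathcal H_{1/2-1/p}[0,1]$.

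\emph{Construction.} Let $\alpha:=1/2-1/p$, fix even integers $\ell_m$ with $\ell_{m+1}\geqslant\ell_m^2$ (e.g.\ $\ell_m=2^{2^m}$), and set
\[r_m:=\frac1{\ell_m m^2},\qquad h_m:=\ell_m^{\alpha}r_m^{-1/p}=\ell_m^{1/2}m^{2/p},\]
so that $\sum_m\ell_m r_m=\sum_m m^{-2}<\infty$ and $\sum_m h_m r_m<\infty$. On $\Z$ take independent families $(\Pi_m)_{m\geqslant1}$, where $\Pi_m$ is obtained by keeping each integer independently with probability $r_m$; at every point $s\in\Pi_m$ place the length-$\ell_m$ pattern equal to $2h_m/\ell_m$ on $\ens{s,\dots,s+\ell_m/2-1}$ and to $-2h_m/\ell_m$ on $\ens{s+\ell_m/2,\dots,s+\ell_m-1}$. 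Since $\sum_m\ell_m r_m\cdot(2h_m/\ell_m)<\infty$, the sum of all these patterns converges absolutely almost surely and defines a strictly stationary, centered, square-integrable sequence $(Y_j)_{j\in\Z}$. Let $(Z_j)_{j\in\Z}$ be an independent i.i.d.\ centered sequence with $\operatorname{Var}(Z_0)=1$ and $\mathbb E\abs{Z_0}^p<\infty$, put $X_j:=Y_j+Z_j$, and realize $(X_j)$ as $(f\circ T^j)$ with $T$ the shift on the appropriate product space. Write $S_n(f)=A_n+B_n$ with $A_n:=\sum_{j=1}^nY_j$ and $B_n:=\sum_{j=1}^nZ_j$. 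The elementary fact used repeatedly is that \emph{each excursion block sums to zero}.

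\emph{Moment bound and finite-dimensional distributions.} Rosenthal's inequality gives $\norm{B_n}_p\leqslant Cn^{1/2}$. For $A_n$: a scale-$m$ block contributes to $A_n$ only if it straddles $\ens{1}$ or $\ens{n}$ (interior blocks cancel), and the number of straddling scale-$m$ blocks is stochastically dominated by a $\operatorname{Bin}(2\ell_m,r_m)$ variable of mean at most $2$, so its $\el^p$-norm is $\leqslant C(\ell_m r_m)^{1/p}$; since each such block contributes at most $\min\ens{h_m,(2h_m/\ell_m)n}$ in absolute value, summing over $m$ and using $\alpha+1/p=1/2$ together with the super-geometric growth of $\ell_m$ (which makes the series over scales comparable to its largest term) yields $\norm{A_n}_p\leqslant Cn^{1/2}$, hence $\mathbb E\abs{S_n(f)}^p\leqslant Cn^{p/2}$. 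The same straddling estimate performed in $\el^1$, with $\sum_m\ell_m r_m<\infty$ and $p>2$, gives $n^{-1/2}\mathbb E\abs{A_{[nt]}}\to0$, hence $n^{-1/2}A_{[nt]}\to0$ in probability; since also $n^{-1/2}(nt-[nt])X_{[nt]+1}\to0$ in probability and $n^{-1/2}B_{[nt]}$ converges (jointly in $t$) to a standard Brownian motion by Donsker, the finite-dimensional distributions of $(W_n(f))$ converge to those of a standard Brownian motion. (The same estimate in $\el^2$ also gives $\mathbb E[A_n^2]=o(n)$, so $\sigma_n^2(f)/n\to1$.)

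\emph{Failure of tightness.} Suppose $(W_n(f))_{n\geqslant1}$ were tight in $\mathcal H_\alpha[0,1]$. Any subsequential weak limit is then a Radon probability measure on $\mathcal H_\alpha[0,1]$, hence concentrated on a $\sigma$-compact separable subset and therefore determined by its finite-dimensional distributions; by the previous step it is the law of a standard Brownian motion $W$, so $W_n(f)\to W$ in distribution in $\mathcal H_\alpha[0,1]$. As $x\mapsto w_\alpha(x,\delta)$ is $1$-Lipschitz on $\mathcal H_\alpha[0,1]$, this would force $\lim_{\delta\to0}\limsup_n\mu\ens{w_\alpha(W_n(f),\delta)>\eps}=0$ for every $\eps>0$, and this I shall contradict. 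Given $\delta>0$, pick $m$ so large that $m^{-2}\leqslant\delta$ and put $n_m:=\ent{1/r_m}$. Let $E_m$ be the event that: (i) $\Pi_m$ contains some $s\in\ens{1,\dots,n_m-\ell_m}$ which is the only point of $\Pi_m$ in $\ens{s-\ell_m+1,\dots,s+\ell_m-1}$ (an ``isolated'' scale-$m$ block; take the smallest such $s$); (ii) no scale-$m'$ block with $m'>m$ meets $\ens{1,\dots,n_m}$; (iii) the total contribution of the scales $m'<m$ to $\sum_{i=s}^{s+\ell_m/2-1}Y_i$ is at most $h_m/4$ in absolute value; (iv) $\abs{\sum_{i=s}^{s+\ell_m/2-1}Z_i}\leqslant h_m/4$. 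Since $n_m r_m\to1$, a routine second-moment/Markov computation — where (iii) again uses that interior blocks cancel, so only $\operatorname{Bin}(2\ell_{m'},r_{m'})$-many scale-$m'$ blocks contribute, each by at most $h_{m'}$, and $\sum_{m'<m}h_{m'}\ell_{m'}r_{m'}=o(h_m)$ by super-geometric growth — shows $\mu(E_m)\geqslant1/3$ for all large $m$. On $E_m$, the increment of $W_{n_m}(f)$ over the subinterval $[(s-1)/n_m,(s-1+\ell_m/2)/n_m]$, whose length is $\tfrac12\ell_m r_m/(n_mr_m)\leqslant m^{-2}\leqslant\delta$, equals $n_m^{-1/2}$ times a quantity of absolute value at least $h_m/2$; dividing by the length raised to the power $\alpha$ and using $h_m=\ell_m^{\alpha}r_m^{-1/p}$, $n_m^{\alpha-1/2}=n_m^{-1/p}$ and $n_mr_m\leqslant1$ shows this H\"older ratio is $\geqslant2^{\alpha-1}$. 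Hence $\limsup_n\mu\ens{w_\alpha(W_n(f),\delta)>2^{\alpha-2}}\geqslant1/3$ for every $\delta>0$, contradicting tightness; so $(W_n(f))$ is not tight in $\mathcal H_{1/2-1/p}[0,1]$.

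\emph{Main obstacle.} The delicate point is the joint calibration of $\ell_m,r_m,h_m$: destroying the H\"older modulus forces the excursions to be large on short dyadic blocks, i.e.\ $h_m\gtrsim\ell_m^{\alpha}r_m^{-1/p}$, whereas $\mathbb E\abs{S_n(f)}^p=O(n^{p/2})$ and the vanishing of the excursion part of the finite-dimensional distributions demand they be sparse. The two are reconciled exactly by $\alpha=1/2-1/p$: with $h_m=\ell_m^{\alpha}r_m^{-1/p}$ the per-scale contribution to $\norm{S_n(f)}_p$ is $O(\ell_m^{1/2})$, and choosing $\ell_m$ super-geometric makes the series over the relevant scales comparable with its largest term, which is $\lesssim n^{1/2}$; the same super-geometric growth also makes the excursions of scales $m'<m$ negligible (of total size $o(h_m)$) in the lower bound. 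Everything else — Rosenthal's bound, the second-moment/Markov estimates, the finite-dimensional CLT — is routine.
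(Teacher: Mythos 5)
Your construction is correct, and it is genuinely different from the one in the paper. The paper works on an arbitrary non-atomic invertible measure-preserving system: Rokhlin's lemma produces towers of heights $N_l$, the functions $g_l$ are triangular profiles of slope $N_l^{1/p}K_l^{-1/2-1/p}$ supported on the top levels, and the counter-example is $f=m+(g-g\circ T)$ with $m$ a martingale difference. The coboundary structure makes two of the three properties almost free -- $S_n(g-g\circ T)=g\circ T-g\circ T^{n+1}$ telescopes, so the moment bound reduces to $\norm{g-g\circ T^n}_p\leqslant C\sqrt n$ and Burkholder handles the martingale part, while the fdd's come entirely from $m$ -- and non-tightness is read off along $n=N_l$ on the tower base. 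You instead superpose independent Bernoulli point processes carrying zero-sum excursions of length $\ell_m$ and height $h_m=\ell_m^{1/2-1/p}r_m^{-1/p}$ on an i.i.d.\ background; under the dictionary $K_l\leftrightarrow\ell_m$, $N_l\leftrightarrow r_m^{-1}$ the calibration is identical to the paper's (both need super-lacunary scales so that cross-scale interference, the analogue of \eqref{lac1}--\eqref{lac4}, is negligible), but since your excursion part is not a coboundary you must pay for the moment bound with the ``only straddling blocks survive'' cancellation plus binomial moment estimates, and for the fdd's with an $\mathbb L^1$ estimate showing the excursion part is $o(\sqrt n)$; I checked these and they close (in particular $\|A_n\|_p\lesssim\sum_{\ell_m\leqslant n}\ell_m^{1/2}+\sum_{\ell_m>n}n\ell_m^{-1/2}\lesssim\sqrt n$, and the isolated-block event has probability tending to $1-e^{-1}>1/3$). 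What each approach buys: the paper's version lives in any aperiodic system and gets two of the three bullets in one line each; yours is elementary and self-contained (no Rokhlin lemma), produces exactly a standard Brownian limit, and -- a point the paper leaves implicit -- you correctly justify why tightness in the non-separable space $\mathcal H_{1/2-1/p}[0,1]$ forces the modulus condition, by identifying every subsequential limit with Wiener measure on a $\sigma$-compact support before applying portmanteau to the closed sets $\ens{w_\alpha(\cdot,\delta)\geqslant\varepsilon}$.
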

  
   \begin{rem}\label{other_methods}
 The constructed process has no reason to be $\alpha$-mixing. However, this 
 proves that in general, establishing tightness in $\mathcal H_{1/2-1/p}^0[0,1]$ 
 of $(W_n(f))_{n\geqslant 1}$
 cannot be done by proving boundedness in $\mathbb L^p$ of the sequence 
 $(W_n(f))_{n\geqslant 1}$. Thus other methods need to be used.
 \end{rem}

Let us recall that a sequence $(c_n)_{n\geqslant 1}$ is \textit{slowly varying} if there exists 
   a continuous function $h\colon \R_+^*\to\R_+^*$ such that $c_n=h(n)$ for each 
   positive integer $n$ and for each positive $x$, $\lim_{t\to \infty}h(tx)/h(t)=1$. 
 \begin{rem}\label{remark_moment_bound}
  If $p>2$ and $(f\circ T^j)_{j\geqslant 0}$ is a strictly stationary 
  centered sequence such that the finite dimensional distributions 
  of $(\sigma_n^{-1}S_n^{\mathrm{pl}}(f))_{n\geqslant 1}$ 
  converge to those of a standard Brownian motion, the sequence $(\sigma_n^2(f)/n)_{n\geqslant 1}$ 
  is slowly varying, and the sequence $\left(\mathbb E\abs{S_n(f)}^p/\sigma_n^p\right)_{n\geqslant 1}$ 
  is bounded, then for each $\gamma<1/2-1/p$ the sequence 
  $(\sigma_n^{-1}S_n^{\mathrm{pl}}(f))_{n\geqslant 1}$ 
  converges in distribution in $\mathcal H_\alpha^0[0,1]$ to a standard Brownian motion.
  This can be seen using tightness criterion \eqref{tightness_criterion}, 
  Markov's inequality and 
  boundedness in $\mathbb L^p$ of $\left(\sigma_n^{-1}\max_{1\leqslant j
  \leqslant n}\abs{S_j(f)}\right)_{n\geqslant 1}$ (by Serfling
  arguments, see \cite{MR0268938}).
 \end{rem}
  
 \section{Proofs}  

\begin{proof}[Proof of Theorem~\ref{theorem_tau_dependent}]
Notice that \eqref{condition_tau_dependence} implies finiteness of
$\int_0^1Q_f^2(u)(\tau/2)^{-1}\circ G_f^{-1}(u)\mathrm du$, hence
condition~(5.5) in \cite{MR2105738}. This implies the 
convergence of $\left(\sigma_n^2(f)/n\right)_{n\geqslant 1}$ to 
$\sigma(f)$. Since $\theta(k)$ is smaller than 
$\tau(k)$, Corollary~1 in \cite{MR1983043} shows that 
the function $f$ satisfies the projective criterion by Dedecker and Rio 
(see \cite{MR1743095}), from which the convergence of the 
finite dimensional distributions follows. It remains to check tightness of 
$(W_n(f))_{n\geqslant 1}$ in 
$\mathcal H_{1/2-1/p}^0[0,1]$. We shall check that \eqref{tightness_criterion2} 
is satisfied. To this aim, we apply Theorem~\ref{Theorem_tail_inequality_tau_dep} for 
each $k\in \ens{1,\dots,\log [n\delta]}$ 
with some $r>p$, $N:=2^k$ and $\lambda:=\varepsilon 2^{k\alpha}n^{1/p}$ . This gives 
\begin{multline*}
 n\sum_{k=1}^{\log[n\delta]}2^{-k}
  \mu\ens{\max_{1\leqslant i\leqslant 
  2^{k}}\abs{S_i(f)}>
  5\varepsilon 2^{k\alpha} n^{1/p}}
  \leqslant n\sum_{k=1}^{\log[n\delta]}2^{-k}
  4r^{r/2}s_{2^k}(f)^r\left(\varepsilon 2^{k\alpha}n^{1/p}\right)^{-r}
  +\\
  +n\sum_{k=1}^{\log[n\delta]}(\varepsilon 2^{k\alpha}n^{1/p})^{-1}
  \int_0^1Q(u)\mathbf 1\ens{R(u) \geqslant \varepsilon 2^{k\alpha}n^{1/p}/r}\mathrm du.
\end{multline*}
By \eqref{condition_tau_dependence}, the quantity $C:=
\int_0^{\norm{f}_1}(\tau/2)^{-1}(u)
  Q_{f}\circ G_{f}(u)\mathrm du$ is finite.
In view of \eqref{control_variance_tau}, we thus have 
\begin{multline}\label{tightness_tau1}
 n\sum_{k=1}^{\log[n\delta]}2^{-k}
  \mu\ens{\max_{1\leqslant i\leqslant 
  2^{k}}\abs{S_i(f)}>
  5\varepsilon 2^{k\alpha} n^{1/p}}
  \leqslant 4\cdot (4r)^{r/2}C^{r/2}n\sum_{k=1}^{\log[n\delta]}2^{-k}
  2^{kr/2}\left(\varepsilon 2^{k\alpha}n^{1/p}\right)^{-r}
  +\\
  +n^{1-1/p}\sum_{k=1}^{\log[n\delta]}(\varepsilon 2^{k\alpha})^{-1}
  \int_0^1Q(u)\mathbf 1\ens{R(u) \geqslant \varepsilon 2^{k\alpha}n^{1/p}/r}\mathrm du=:(I)+
  (II).
\end{multline}
A simple computation shows that 
\begin{equation}\label{tightness_tau2}
 (I)\leqslant K(r,p,\varepsilon)\delta^{r/p-1},
\end{equation}
and for the second term, we have the upper bound 
\begin{equation}\label{tightness_tau3}
 (II)\leqslant K(\alpha,\varepsilon)
 n^{(p-1)/p}
 \int_0^1Q(u)\mathbf 1\ens{(\tau/2)^{-1}\circ G_f^{-1}(u)Q(u) \geqslant \varepsilon n^{1/p}/r}\mathrm du.
\end{equation}
Since $r>p$, the condition \eqref{tightness_criterion2} is satisfied 
in view of \eqref{tightness_tau1}, \eqref{tightness_tau2}, 
\eqref{tightness_tau3} and \eqref{condition_tau_dependence}.

\end{proof}

\begin{proof}[Proof of Corollary~\ref{corollary_alpha_mixing}]
 It suffices to check that condition \eqref{condition_alpha} implies 
 \eqref{condition_tau_dependence}. Notice that by 
 \eqref{comparison_tau_alpha}, we have for a fixed $v$, 
 \begin{equation}
  \inf\ens{i\mid \tau(i)/2\leqslant v}\leqslant 
  \inf\ens{i\mid G^{-1}(2(\alpha(i))) \leqslant v}=
  \inf\ens{i\mid \alpha(i)\leqslant G(v)/2}
 \end{equation}
 hence $(\tau/2)^{-1}(v)\leqslant \alpha^{-1}(G(v)/2)$. Taking 
 $v=G^{-1}(u)$ for a fixed $u$, we get 
 \begin{equation}
  (\tau/2)^{-1}\circ G^{-1}(u)\leqslant \alpha^{-1}(u/2).
 \end{equation}
 Since the function $u\mapsto \alpha^{-1}(u)$ is non-increasing, 
 the inclusion
 \begin{equation*}
\ens{\left(\tau/2\right)^{-1}\circ G^{-1}(u)Q_f(u)> t}
\subset \ens{\alpha^{-1}(u/2)Q_f(u/2)> t}.
 \end{equation*}
 takes place. As a consequence, we obtain 
 \begin{multline}
 t^{p-1}\int_0^1Q_f(u)
    \mathbf 1\ens{\left(\left(\tau/2\right)^{-1}\circ G_{f}^{-1}
  \right)(u)Q_f(u)>t}\mathrm du\leqslant \\
  \leqslant
  t^{p-1}\int_0^1Q_f(u/2)\mathbf 1\ens{\alpha^{-1}(u/2)Q_f(u/2)> t}\mathrm du,
 \end{multline}
 which concludes the proof of Corollary~\ref{corollary_alpha_mixing}.
\end{proof}

\begin{proof}[Proof of Theorem~\ref{theorem_rho_mixing}]
 Theorem~4.1. of \cite{MR672297} guarantees the existence of the limit 
of the sequence $(\sigma_n^2/n)_{n\geqslant 1}$ and \cite{MR996917} gives the 
convergence of the finite dimensional distributions. Therefore, the proof will 
be finished if we check the convergence \eqref{tightness_criterion2}. 
We apply for Theorem~\ref{tail_inequality_Shao} for each 
$1\leqslant k\leqslant \log[n\delta]$ with a $q>p$, $N:=2^k$ $x:=\varepsilon 2^{k\alpha}n^{1/p}$
and $A:=2^{k\alpha}n^{1/p}\eta$, where $\eta$ is fixed (notice that since 
\begin{equation}
 \mathbb E\left[\abs f \mathbf 1\ens{\abs f\geqslant A}\right]
 =A\mu\ens{\abs f\geqslant A}+\int_A^{+\infty}\mu\ens{\abs f\geqslant t}\mathrm dt
 \leqslant C(p,f)A^{1-p},
\end{equation}
we have for $n\geqslant n(\eta,p,\varepsilon,f)$ and $1\leqslant k\leqslant \log [n\delta]$,
\begin{equation}
 2\cdot 2^{k}\cdot \mathbb E\left[\abs f \mathbf 1\ens{\abs f\geqslant A}\right]
 \leqslant 2C(p,f)(\eta 2^{k\alpha}n^{1/p})^{1-p}
 \leqslant \varepsilon 2^{k\alpha}n^{1/p}=x,
\end{equation}
hence \eqref{condition_on_A} is satisfied). This yields 
\begin{multline}\label{bound_rho_mixing1}
 n\sum_{k=1}^{\log[n\delta]}2^{-k}
  \mu\ens{\max_{1\leqslant i\leqslant 
  2^{k}}\abs{S_i(f)}\geqslant
  \varepsilon 2^{k\alpha} n^{1/p}}\leqslant n\sum_{k=1}^{\log[n\delta]}
  \mu\ens{\abs f\geqslant2^{k\alpha}n^{1/p}}+\\
  +K\exp\left(K\sum_{i=0}^\infty\rho(2^i)\right)
  n\sum_{k=1}^{\log[n\delta]}2^{-k}2^{kq/2}(\varepsilon 2^{k\alpha}n^{1/p})^{-q}
 \norm{f}_2^ q  +\\
 +K n\sum_{k=1}^{\log[n\delta]}\exp\left(K\sum_{i=0}^k\rho^{2/q}(2^i)\right)
 (\varepsilon 2^{k\alpha}n^{1/p})^{-q}\norm{f\mathbf 1\ens{\abs f\leqslant 
 \eta 2^{k\alpha}n^{1/p}}}_q^q.
\end{multline}
Since for some constant $C$ depending only on $f$ and $p$, the bound
\begin{equation}
 \norm{f\mathbf 1\ens{\abs f\leqslant \eta 2^{k\alpha}n^{1/p}}}_q^q
 \leqslant C(\eta é2^{k\alpha}n^{1/p})^{q-p}
\end{equation}
is valid, we derive from \eqref{bound_rho_mixing1} the inequality 
\begin{multline*}
 n\sum_{k=1}^{\log[n\delta]}2^{-k}
  \mu\ens{\max_{1\leqslant i\leqslant 
  2^{k}}\abs{S_i(f)}\geqslant
  \varepsilon 2^{k\alpha} n^{1/p}}\leqslant 
  \varepsilon^{-p}\eta^{-p}\sup_{t\geqslant n^{1/p}}t^p\mu\ens{\abs f\geqslant t}
  \sum_{k=1}^{\infty}2^{-kp\alpha}
  +\\
  +K\exp\left(K\sum_{i=0}^\infty\rho(2^i)\right)
  \delta^{q/p-1}\cdot \frac 1{2^{q/p}-1}\varepsilon^{-q}
 \norm{f}_2^ q  +\\
 +KC\varepsilon^{-q}\eta^{q-p} \sum_{k=1}^{\log[n\delta]}\exp\left(K\sum_{i=0}^k\rho^{2/q}(2^i)\right)
 2^{-kp\alpha}.
\end{multline*}
Since $\lim_{t\to \infty}t^p\mu\ens{\abs f>t}=0$, we obtain for each $\delta$ and $\eta$
\begin{multline*}
 \limsup_{n\to \infty}
  n\sum_{k=1}^{\log[n\delta]}2^{-k}
  \mu\ens{\max_{1\leqslant i\leqslant 
  2^{k}}\abs{S_i(f)}>
  \varepsilon 2^{k\alpha} n^{1/p}}\leqslant
  K\exp\left(K\sum_{i=0}^\infty\rho(2^i)\right)
  (2\delta)^{q/p-1}\varepsilon^{-q}
 \norm{f}_2^ q  +\\
 +KC\varepsilon^{-q}\eta^{q-p} \sum_{k=1}^{\infty}\exp\left(K\sum_{i=0}^k\rho^{2/q}(2^i)\right)
 2^{-kp\alpha},
\end{multline*}
from which \eqref{tightness_criterion2} follows (the convergence of the last series 
is ensured by the ratio test and the convergence to $0$ of $\rho^{2/q}(2^k)$). 
\end{proof}

  \begin{proof}[Proof of Theorem~\ref{counter_example}]
  We assume that $(\Omega,\mathcal F,\mu,T)$ is a non-atomic invertible
  measure preserving system.
  We shall first construct a function $g$ such that:
  \begin{enumerate}
   \item the sequence $(\mathbb E\abs{S_n(g-g\circ T)}^p/n^{p/2}
   )_{n\geqslant 1}$ is bounded and 
   \item the process $(W_n(g-g\circ T)
   )_{n\geqslant 1}$ is not tight in $\mathcal H_{1/2-1/p}^0[0,1]$.
  \end{enumerate}
  We then consider $f:=m+g-g\circ T$, where $m$ is such that 
  $(m\circ T^j)_{j\geqslant 0}$ is a martingale difference 
  sequence with $m\in\mathbb L^p$ and $m\neq 0$. This 
  will guarantee the convergence of the finite dimensional 
  distributions of $(W_n(f))_{n\geqslant 1}$ 
  to those of a scalar multiple of a standard Brownian motion, and 
  Burkholder's inequality ensures boundedness of the sequence 
  $(\mathbb E\abs{S_n(f)}^p/n^{p/2}
   )_{n\geqslant 1}$. We use a construction similar to 
   that given in \cite{MR1893125}. Let us consider two increasing 
   sequences of integer $(K_l)_{l\geqslant 1}$ and $(N_l)_{l\geqslant 1}$ 
   satisfying for each $l\geqslant 2$:
   \begin{align}
   \lim_{l\to +\infty}N_l\sum_{l'>l}K_{l'}/N_{l'}=0;\label{lac1}\\
  4N_{l}^{-1/p}\cdot l\cdot N_{l-1}<1;  \label{lac2}\\
  \sum_{i=1}^lK_i^{1/2}\leqslant K_{l+1}^{1/2};\label{lac3}\\
  \sum_{l=1}^{+\infty}\frac{K_l}{K_{l+1}^{1/2}}<\infty\label{lac4}.
 \end{align}
  We also assume that $4K_l\leqslant N_l$ for each $l$. 
  
  Let us fix an integer $l$. Using Rokhlin's lemma, we can find a 
  set $A_l\in\mathcal F$ such that the set $T^iA_l$, $0\leqslant i\leqslant N_l-1$
   are pairwise disjoint and $\mu\left(\bigcup_{i=0}^{N_l-1}T^iA_l\right)
  \geqslant 1/2$. We define 
 \[h_l:=\frac{N_l^{1/p}}{K_l^{1/2+1/p}}\cdot\mathbf 1\left(\bigcup_{j=1}^{K_l}T^{N_l-j}A_l\right);\]
 \[g_l:=\sum_{j=0}^{K_l-1}h_l\circ T^j=\frac{N_l^{1/p}}{K_l^{1/2+1/p}}\left(\sum_{j=1}^{K_l}j
 \mathbf 1(T^{N_l-j}A_l)+
 \sum_{j=K_l+1}^{2K_l-1}(2K_l-j)\mathbf 1(T^{N_l-j}A_l)\right);\]
 \[g:=\sum_{l=1}^{+\infty}g_l.\]

Assume that $\omega\in A_l$ and $N_l-K_l\leqslant i\leqslant N_l-1$. Then 
$g_l\circ T^i(\omega)=N_l-i$. Consequently, for $i,i'\in\ens{N_k-K_l,\dots, 
N_l-1}$,  
\[
 \abs{g_l\circ T^i-g_l\circ T^{i'}}\geqslant \frac{N_l^{1/p}}{K_l^{1/2+1/p}}
 \abs{i'-i}\mathbf 1(A_l).
\]
Applying $U^{-k}$ on both sides of the previous inequality 
for $0\leqslant k\leqslant N_l-K_l$ and taking the 
maximum over these $k$, we obtain 
\[
 \max_{0\leqslant k\leqslant N_l-K_l}
 \abs{g_l\circ T^{i-k}-g_l\circ T^{i'-k}}\geqslant \abs{i'-i}
 \frac{N_l^{1/p}}{K_l^{1/2+1/p}}\mathbf 1\left(\bigcup_{k=0}^{N_k-K_l}
 T^k(A_l)\right).
\]
This implies 
\begin{equation}\label{borne_inf_g_l}
 \frac 1{N_l^{1/p}}\max_{1\leqslant i<i'\leqslant N_l}
 \frac{\abs{g_l\circ T^{i'}-g_l\circ T^i}}{(i'-i)^{1/2-1/p}}\geqslant 
 \mathbf 1\left(\bigcup_{k=0}^{N_k-K_l}
 T^k(A_l)\right).
\end{equation}

 For $l'<l$, noticing that $\abs{g_{l'}}\leqslant N_{l'}^{1/p}K_{l'}^{1/2-1/p}$, the following inequality 
 takes place:
 \begin{equation*}
 \max_{1\leqslant i<j\leqslant N_l}\abs{g_{l'}\circ T^j-g_{l'}\circ T^i}\\
 \leqslant 2N_{l'}^{1/p}K_{l'}^{1-1/p},
 \end{equation*}
 therefore, 
 \begin{multline*}
 \frac 1{N_l^{1/p}}\max_{1\leqslant i<i'\leqslant N_l}\frac{\abs{\sum_{l'<l}g_{l'}\circ T^{i'}
 -\sum_{l'<l}g_{l'}\circ T^{i}}}
  {(i'-i)^{1/2-1/p}}\leqslant \\
  \leqslant 
  \frac 1{N_l^{1/p}}\sum_{l'<l}2N_{l'}^{1/p}K_{l'}^{1-1/p}
  \leqslant 2N_l^{-1/p}lN_{l-1}.
 \end{multline*}
 By condition~\eqref{lac2}, we conclude that 
 \begin{equation}\label{borne}
  \frac 1{N_l^{1/p}}\max_{1\leqslant i<i'\leqslant N_l}\frac{\abs{\sum_{l'<l}g_{l'}\circ T^{i'}
 -\sum_{l'<l}g_{l'}\circ T^{i}}}
  {(i'-i)^{1/2-1/p}}\leqslant 1/2.
 \end{equation}
 Moreover, notice that 
  \begin{multline*}
  \mu\ens{N_l^{-1/p}\max_{1\leqslant i<i'\leqslant N_l}\frac{\abs{\sum_{l'>l}g_{l'}\circ T^{i'}
 -\sum_{l'>l}g_{l'}\circ T^{i}}}
  {(i'-i)^{1/2-1/p}}\neq 0}\\
  \leqslant \sum_{l'>l}\mu\ens{\max_{1\leqslant i<i'\leqslant N_l}\abs{g_{l'}\circ T^{i'}
 -g_{l'}\circ T^{i}}\neq 0}\\
  \leqslant N_l\sum_{l'>l}\mu\ens{g_{l'}\neq 0},
 \end{multline*}
 hence 
 \begin{multline}\label{support}
  \mu\ens{\frac 1{N_l^{1/p}}\max_{1\leqslant i<i'\leqslant N_l}\frac{\abs{\sum_{l'>l}g_{l'}\circ T^{i'}
 -\sum_{l'>l}g_{l'}\circ T^{i}}}
  {(i'-i)^{1/2-1/p}}\neq 0}\leqslant\\
  \leqslant 2N_l\sum_{l'>l}K_{l'}/N_{l'}.
 \end{multline}
  By \eqref{borne} and \eqref{support}, we get 
 \begin{multline*}
  \mu\ens{\max_{1\leqslant i<i'\leqslant N_l}\frac{\abs{g\circ T^{i'}-g\circ T^{i}}}
  {(i'-i)^{1/2-1/p}}\geqslant N_l^{1/p}/2}\geqslant\\ 
  \geqslant 
  \mu\ens{\max_{1\leqslant i<i'\leqslant N_l}\frac{\abs{\sum_{l'\geqslant l}(g_{l'}\circ T^{i'}-g_{l'}\circ T^{i})}}
  {(i'-i)^{1/2-1/p}}\geqslant N_l^{1/p}}\\
  \geqslant \mu\ens{\max_{1\leqslant i<i'\leqslant N_l}\frac{\abs{(g_{l}\circ T^{i'}-g_{l}\circ T^{i})}}
  {(i'-i)^{1/2-1/p}}\geqslant N_l^{1/p}}
  -2N_l\sum_{l'>l}K_{l'}/N_{l'}.
 \end{multline*}
 Combining the previous inequality with \eqref{borne_inf_g_l}, we obtain for each 
 integer $l$, 
 \begin{equation}\label{ineg_non_tens}
  \mu\ens{\max_{1\leqslant i<i'\leqslant N_l}\frac{\abs{g\circ T^{i'}-g\circ T^{i}}}
  {(i'-i)^{1/2-1/p}}\geqslant N_l^{1/p}/2}\geqslant \frac 12-\frac{K_l}{2N_l}-2N_l\sum_{l'>l}K_{l'}/N_{l'},
 \end{equation}
 and by \eqref{lac1}, the inequality 
 \begin{equation}
  \mu\ens{\max_{1\leqslant i<i'\leqslant N_l}\frac{\abs{g\circ T^{i'}-g\circ T^{i}}}
  {(i'-i)^{1/2-1/p}}\geqslant \frac{N_l^{1/p}}2}\geqslant \frac 18
 \end{equation}
 holds for $l$ large enough. We deduce that for such integers $l$ and each 
 $\delta\in (0,1)$, 
 \begin{equation}\label{ineg_non_tens2}
  \mu\ens{w_{1/2-1/p}\left(\frac 1{
   \sqrt{N_l}}S_{N_l}^{\mathrm{pl}}(g-g\circ T),\delta\right)\geqslant 1/2}\geqslant \frac 18,
 \end{equation}
 hence the process $(W_n(g-g\circ T))_{n\geqslant 1}$ cannot be tight 
 in $\mathcal H_{1/2-1/p}[0,1]$.
 
 It remains to show that the sequence $(n^{-1/2}(g-g\circ T^n))_{n\geqslant 1}$ is 
 bounded in $\mathbb L^p$. Notice that for a fixed integer $l\geqslant 1$, the 
 equalities 
 \[
 \abs{g_l-g_l\circ T}=\abs{h_l-h_l\circ T^{K_l}}=\frac{N_l^{1/p}}{K_l^{1/2+1/p}}\cdot 
 \mathbf 1\left(\bigcup_{j=1}^{2K_l}T^{N_l-j}A_l\right)
 \]
 take place. This implies that 
 \begin{align*}
  \norm{g_l-g_l\circ T}_p&=\frac{N_l^{1/p}}{K_l^{1/2+1/p}}\cdot 
  \left[\mu\left(\bigcup_{j=1}^{2K_l}T^{N_l-j}A_l\right)\right]^{1/p}\\
  &\leqslant \frac{N_l^{1/p}}{K_l^{1/2+1/p}}\left(\frac{2K_l}{N_l}\right)^{1/p}=2^{1/p}K_l^{-1/2},
 \end{align*}
 hence for each integer $n\geqslant 1$, $\norm{g_l-g_l\circ T^n}_p\leqslant 2^{1/p}nK_l^{-1/2}$. 
 Let us define 
 \[
  \widetilde{g_l}:=\frac{N_l^{1/p}}{K_l^{1/2+1/p}}\sum_{j=1}^{K_l}j\mathbf 1(T^{N_l-j}A_l).
 \]
 
 If $K_l\leqslant n$, then 
 \begin{align*}
 \widetilde{g_l}-\widetilde{g_l}\circ T^n&=\frac{N_l^{1/p}}{K_l^{1/2+1/p}}\left(\sum_{j=1}^{K_l} j 
 \mathbf 1(T^{N_l-j}A_l)-\sum_{j=1}^{K_l} j \mathbf 1(T^{N_l-j-n}A_l)\right)\\
 &=\frac{N_l^{1/p}}{K_l^{1/2+1/p}}\left(\sum_{j=1}^{K_l} j \mathbf 1(T^{N_l-j}A_l)-
 \sum_{j=n+1}^{n+K_l}(j-n)\mathbf 1(T^{N_l-j}A_l)\right),
 \end{align*} 
 hence 
 \[
  \abs{\widetilde{g_l}-\widetilde{g_l}\circ T^n}\leqslant\frac{N_l^{1/p}}{K_l^{1/2+1/p}}\left(\sum_{j=1}^{K_l} j
  \mathbf 1(T^{N_l-j}A_l)+\sum_{j=n+1}^{n+K_l} (j-n) \mathbf 1(T^{N_l-j-n}A_l)\right),
 \]
 and the following upper bound follows:
 \[
 \mathbb E\abs{\widetilde{g_l}-\widetilde{g_l}\circ T^n}^p\leqslant 2^{p-1}\sum_{j=1}^{K_l}j^pK_l^{-1-p/2}\leqslant 
 2^{p-1}K_l^{p/2}.
 \]
 Treating in a similar manner the function $g_l-\widetilde{g_l}$, we observe that 
 the following inequality holds:
 \begin{equation}\label{moment_cobord}
 \norm{g_l-g_l\circ T^n}_p\leqslant C_p\begin{cases}
                                        nK_l^{-1/2}&\mbox{ if }K_l>n\\
                                        K_l^{1/2}&\mbox{ otherwise,}
                                       \end{cases}  
 \end{equation}
 where $C_p$ depends only on $p$ (neither on $n$, nor on $l$). For a fixed integer $n$, 
 we denote by $i(n)$ the unique integer satisfying the inequalities $K_{i(n)}\leqslant n
 <K_{i(n)+1}$. 
 
 By \eqref{moment_cobord}, we have 
 \begin{align*}
  \norm{g-g\circ T^n}_p&\leqslant \sum_{l=1}^{+\infty}\norm{g_l-g_l\circ T^n}_p\\
  &\leqslant C_p\left(\sum_{l=1}^{i(n)-1}K_l^{1/2}+K_{i(n)}^{1/2}+nK_{i(n)+1}^{-1/2}
  +\sum_{l=i(n)+2}^{+\infty}nK_l^{-1/2}\right)\\
  &\leqslant 3C_p\sqrt n+C_p\sqrt n\sum_{l= i(n)+1}^{+\infty}\frac{K_l}{K_{l+1}^{1/2}}\\
  &\leqslant C_p\left(3+\sum_{l=1}^{+\infty}\frac{K_l}{K_{l+1}^{1/2}}\right)\sqrt n,
 \end{align*}
 where we used \eqref{lac3} in the second inequality and condition \eqref{lac4} ensures 
 finiteness of the right hand side in this inequality.
 
 This concludes the proof of Theorem~\ref{counter_example}.
  \end{proof}
  
 \textbf{Acknowledgements.} \footnote{The final publication is available at Springer via 
\url{http://dx.doi.org/10.1007/s10959-015-0633-9}}
 The author would like to thank the referee for helpful comments which not only 
 improved the presentation of the paper, but also the results of the 
 initial version of Theorem~\ref{theorem_tau_dependent} and 
 Corollary~\ref{corollary_alpha_mixing}.

 The author also thanks Alfredas Ra\v{c}kauskas, Charles Suquet and 
 Dalibor Voln\'y for useful discussions and many valuable remarks and comments.

  \begin{appendices}
   \section{Appendix}
   
   For the reader's convenience, we state deviation inequalities for $\tau$-dependent 
   and $\rho$-mixing sequences.
   
   \begin{nota}
  If $(f\circ T^j)_{j\geqslant 0}$ is a (strictly stationary) 
  sequence of random variables, we define 
  \begin{equation}\label{dfn_s_n}
  s_N^2(f):=\sum_{i=1}^N\sum_{j=1}^N\abs{\operatorname{Cov}(f\circ T^i,
  f\circ T^j)}.
  \end{equation}
  \end{nota}

  \begin{nota}\label{notation_G}
  Let $Y$ be an integrable random variable. We denote by $G_{Y}$
  the generalized inverse of $x\mapsto \int_0^xQ_Y(u)\mathrm du$.
  \end{nota}
   
  The following Fuk-Nagaev inequality was established in Theorem~2 of \cite{MR2105738}. 
   
  \begin{theo}\label{Theorem_tail_inequality_tau_dep}
  Let $(f\circ T^j)_{j\geqslant 0}$ be a strictly stationary sequence of centered and 
  square integrable random variables. Let $R:=((\tau/2)^{-1}\circ G_{f}^{-1}
  )Q_{f}$ and $S=R^{-1}$. For any $\lambda>0$, any integer $N\geqslant 1$ 
  and any $r\geqslant 1$, 
  \begin{equation}\label{tail_inequality_tau_dep}
  \mu\ens{\max_{1\leqslant i\leqslant N}\abs{S_i(f)}\geqslant 
  5\lambda}\leqslant 4\left(1+\frac{\lambda^2}{rs_N^2(f)}\right)^{-r/2}
  +\frac{4N}\lambda\int_0^{S(\lambda/r)}Q_{f}(u)\mathrm du,
  \end{equation}
  and
  \begin{equation}\label{control_variance_tau}
  s_N^2(f)\leqslant 4N\int_0^{\norm{f}_1}(\tau/2)^{-1}(u)
  Q_{f}\circ G_{f}(u)\mathrm du.
  \end{equation}
  \end{theo}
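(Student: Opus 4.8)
The plan is to establish \eqref{tail_inequality_tau_dep} by the quantile-truncation scheme behind Fuk--Nagaev inequalities, the role of independence being played by the coupling encoded in the $\tau$-dependence coefficients, and to deduce \eqref{control_variance_tau} from a covariance inequality. Fix $\lambda>0$, an integer $N\geqslant 1$ and $r\geqslant 1$, and put $u_0:=S(\lambda/r)$, so that $R(u_0)=\bigl((\tau/2)^{-1}\circ G_f^{-1}\bigr)(u_0)\,Q_f(u_0)=\lambda/r$. First I would truncate at height $Q_f(u_0)$, writing $f=f'+f''$ with $f'$ the truncation of $f$ at $\pm Q_f(u_0)$ and $f''$ the large-values remainder. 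Both $f'$ and $f''$ are, up to a universal constant, $1$-Lipschitz images of $f$, so the sequences $(f'\circ T^j)_{j\geqslant 0}$ and $(f''\circ T^j)_{j\geqslant 0}$ inherit $\tau$-coefficients dominated by those of $(f\circ T^j)_{j\geqslant 0}$. Since $\mathbb E f=0$ forces $\mathbb E f'=-\mathbb E f''$, one has $\max_{1\leqslant i\leqslant N}\abs{S_i(f)}\leqslant\max_{1\leqslant i\leqslant N}\abs{S_i(f'-\mathbb E f')}+N\abs{\mathbb E f''}+\sum_{j=1}^N\abs{f''\circ T^j}$; discarding the trivial case where the right-hand side of \eqref{tail_inequality_tau_dep} exceeds $1$, the deterministic term $N\abs{\mathbb E f''}$ is then at most $\lambda$ and may be absorbed, so it remains to control the two random pieces with the mass $5\lambda$ split between them.

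For the large-values remainder, a first-moment estimate is enough: $\max_{1\leqslant i\leqslant N}\abs{S_i(f'')}\leqslant\sum_{j=1}^N\abs{f''\circ T^j}$, whence by stationarity and Markov's inequality the associated probability is at most $(c\lambda)^{-1}N\,\mathbb E\abs{f''}$ for a suitable constant $c$. A direct computation with the quantile function gives $\mathbb E\abs{f''}=\int_{Q_f(u_0)}^{\infty}\mu\ens{\abs f>s}\,\mathrm ds\leqslant\int_0^{u_0}Q_f(u)\,\mathrm du=\int_0^{S(\lambda/r)}Q_f(u)\,\mathrm du$, which is the source of the term $\frac{4N}{\lambda}\int_0^{S(\lambda/r)}Q_f(u)\,\mathrm du$.

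The heart of the matter is an exponential (Bennett--Bernstein type) bound for the bounded centered sequence $(f'\circ T^j-\mathbb E f')_{j\geqslant 0}$. I would cut $\ens{1,\dots,N}$ into alternating blocks of length of the order of the mixing time and, using the coupling property of $\tau$-dependence---on $\R$ the quantile coupling realises the Wasserstein-$1$ distance, so each block admits an independent copy at an $\mathbb L^1$ cost controlled by the relevant $\tau$-coefficient---replace the sum over the odd, resp.\ even, blocks by a sum of independent block variables, paying a Markov price for the coupling error. Bennett's inequality for these independent sums, with uniform bound $Q_f(u_0)$ and total variance controlled, up to a universal constant, by $s_N^2(f)$ through \eqref{dfn_s_n} and the covariance inequality below, yields an exponential estimate; optimising the auxiliary parameter and substituting $R(u_0)=\lambda/r$ converts it into $4\bigl(1+\lambda^2/(rs_N^2(f))\bigr)^{-r/2}$, the constant $4$ absorbing the two blockings and the coupling-error terms. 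Alternatively, one may control the Laplace transform $\mathbb E[\exp(tS_i(f'))]$ by peeling off one summand at a time, bounding each increment by the Lipschitz modulus of $x\mapsto e^{tx}$ on $[-Q_f(u_0),Q_f(u_0)]$ times the appropriate $\tau$-coefficient, and then iterating.

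Finally, \eqref{control_variance_tau} follows from a covariance inequality for $\tau$-dependent sequences: for $i\leqslant j$, writing $\operatorname{Cov}(f\circ T^i,f\circ T^j)=\mathbb E\bigl[f\circ T^i\bigl(\mathbb E[f\circ T^j\mid\sigma(f\circ T^k,k\leqslant i)]-\mathbb E[f\circ T^j]\bigr)\bigr]$, bounding the inner difference by $W(\mu_{f\circ T^j\mid\sigma(f\circ T^k,\,k\leqslant i)})$ (since $x\mapsto x$ is $1$-Lipschitz) and integrating quantiles, one obtains a bound on $\abs{\operatorname{Cov}(f\circ T^i,f\circ T^j)}$ by an integral of $Q_f\circ G_f$ over an interval of length comparable to $\tau(\abs{i-j})$. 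Summing over $1\leqslant i,j\leqslant N$ according to $k=\abs{i-j}$---there are at most $2N$ ordered pairs for each $k$---and exchanging summation and integration reproduces the right-hand side of \eqref{control_variance_tau}. I expect the main obstacle to be the third step: obtaining the Bennett-type estimate for the $\tau$-dependent truncated part with $s_N^2(f)$ as the variance proxy and with exactly the interpolated shape $\bigl(1+\lambda^2/(rs_N^2(f))\bigr)^{-r/2}$---which is precisely what pins the truncation level at $R^{-1}(\lambda/r)$---while the bookkeeping of the absolute constants ($5\lambda$, the various $4$'s) is the remaining, more routine, layer of work.
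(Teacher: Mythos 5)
The paper does not prove this statement at all: it is quoted verbatim in the appendix from Theorem~2 of \cite{MR2105738} (Dedecker and Prieur), so there is no internal proof to compare yours against, and the intended justification here is simply the citation. Judged on its own merits, your sketch correctly reconstructs the architecture of the Dedecker--Prieur argument: quantile truncation at height $Q_f(u_0)$ with $u_0=S(\lambda/r)$, a first-moment/Markov bound for the unbounded remainder producing the term $\frac{4N}{\lambda}\int_0^{S(\lambda/r)}Q_f(u)\,\mathrm du$, a blocking-plus-coupling reduction of the truncated part to independent block sums followed by a Bennett-type inequality, and a covariance inequality combined with Fubini in $k=\abs{i-j}$ for \eqref{control_variance_tau}. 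Two caveats. First, you assign the coupling error to the multiplicative constant of the exponential term; but a Markov bound on the coupling error is an $\mathbb L^1$-type contribution of order $N\tau(q)/\lambda$, which cannot be hidden in a prefactor of $\bigl(1+\lambda^2/(rs_N^2(f))\bigr)^{-r/2}$ and must instead be absorbed into the linear term $\frac{4N}{\lambda}\int_0^{S(\lambda/r)}Q_f(u)\,\mathrm du$. This is exactly why the block length is taken to be $q=(\tau/2)^{-1}\bigl(G_f^{-1}(u_0)\bigr)$: it guarantees $\tau(q)\leqslant 2\int_0^{u_0}Q_f(s)\,\mathrm ds$, while simultaneously $q\,Q_f(u_0)=R(u_0)=\lambda/r$ supplies the Bernstein-type uniform bound on each truncated block. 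Second, the step you yourself flag as the main obstacle --- the exponential inequality for the independent block variables with $s_N^2(f)$ as variance proxy and the exact interpolated shape $\bigl(1+\lambda^2/(rs_N^2(f))\bigr)^{-r/2}$ --- is the technical core of the theorem and is not carried out; as written, your text is a correct and well-informed plan rather than a proof. Given that the paper itself only cites the result, the appropriate course is to do the same (or to follow the reference if a self-contained argument is wanted).
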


  For $\rho$-mixing sequences, Shao (Theorem~1.2, \cite{MR1334179}) 
  showed the following inequality. 

  \begin{theo}\label{tail_inequality_Shao}
  Let $(f\circ T^j)_{j\geqslant 0}$ be a strictly stationary sequence of centered 
  random variables and $q\geqslant 2$. Then there exists a constant $K$ depending 
  only on $q$ and the sequence 
  $(\rho(n))_{n\geqslant 1}$ such that for each integer $N$ and $x>0$, 
  \begin{multline}\label{Shao}
  \mu\ens{\max_{1\leqslant i\leqslant N}\abs{S_i(f)}\geqslant x}
  \leqslant N\mu\ens{\abs f\geqslant A}+\\
  Kx^{-q}\left(N^{q/2}\exp\left(K\sum_{i=0}^{[\log N]}\rho(2^i)\right)\norm{f}_2^q
  +N\exp\left(K\sum_{i=0}^{[\log N]}\rho^{2/q}(2^i)\right)\norm{f
  \mathbf 1\ens{\abs f\leqslant A}}_q^q\right).
  \end{multline}
  where $A$ satisfies 
  \begin{equation}\label{condition_on_A}
   2N\cdot\mathbb E\left[\abs f \mathbf 1\ens{\abs f\geqslant A}\right]\leqslant x.
  \end{equation}
  \end{theo}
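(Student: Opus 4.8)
The plan is to reduce the tail bound to a Rosenthal-type moment inequality for the maximum of partial sums via truncation, and then to establish that moment inequality by a dyadic blocking recursion. Fix $A$ as in \eqref{condition_on_A}, write $X_j:=f\circ T^j$, and split $X_j=\bar Y_j+\mathbb E Y_j+Z_j$ with $Y_j:=X_j\mathbf 1\ens{\abs{X_j}\leqslant A}$, $\bar Y_j:=Y_j-\mathbb E Y_j$ and $Z_j:=X_j\mathbf 1\ens{\abs{X_j}> A}$. On the event that $\abs{X_j}\leqslant A$ for every $1\leqslant j\leqslant N$ all the $Z_j$ vanish, and the complementary event has probability at most $N\mu\ens{\abs f\geqslant A}$ by the union bound, accounting for the first term of \eqref{Shao}. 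Since $f$ is centered, $\mathbb E Y_j=-\mathbb E\croch{X_j\mathbf 1\ens{\abs{X_j}>A}}$, so $\abs{\sum_{j=1}^i\mathbb E Y_j}\leqslant N\,\mathbb E\croch{\abs f\mathbf 1\ens{\abs f\geqslant A}}\leqslant x/2$ by \eqref{condition_on_A}. Hence on the good event $\max_{1\leqslant i\leqslant N}\abs{S_i(f)}\leqslant\max_{1\leqslant i\leqslant N}\abs{\sum_{j=1}^i\bar Y_j}+x/2$, and it remains to bound $\mu\ens{\max_{1\leqslant i\leqslant N}\abs{\sum_{j=1}^i\bar Y_j}\geqslant x/2}$, for which I would use Markov's inequality at exponent $q$.

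The heart of the matter is then the moment inequality
\[
\mathbb E\croch{\max_{1\leqslant i\leqslant N}\abs{\sum_{j=1}^i\bar Y_j}^q}\leqslant K\pa{N^{q/2}\exp\Big(K\sum_{i=0}^{[\log N]}\rho(2^i)\Big)\norm{\bar Y_1}_2^q+N\exp\Big(K\sum_{i=0}^{[\log N]}\rho^{2/q}(2^i)\Big)\norm{\bar Y_1}_q^q}.
\]
Because $\bar Y_j$ is $\sigma(X_j)$-measurable, the sequence $(\bar Y_j)_j$ inherits the mixing coefficients $\rho(n)$, while $\norm{\bar Y_1}_2\leqslant\norm f_2$ and $\norm{\bar Y_1}_q\leqslant 2\norm{f\mathbf 1\ens{\abs f\leqslant A}}_q$. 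Substituting these estimates and absorbing the constants into $K$ turns the previous reduction into exactly \eqref{Shao}.

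To prove this moment inequality I would decompose $\ens{1,\dots,N}$ into dyadic blocks and iterate a doubling recursion over the $[\log N]$ scales. At scale $2^i$ one pairs consecutive blocks and expands the $q$-th moment of their sum; the cross terms between two blocks separated by a gap of order $2^i$ are estimated through the $\rho$-mixing coefficient applied to the associated conditional-expectation operators. The part responsible for the variance is handled by the linear $\mathbb L^2$ correlation bound, which upon doubling yields the telescoping product $\prod_{i=0}^{[\log N]}\pa{1+K\rho(2^i)}\leqslant\exp\big(K\sum_{i=0}^{[\log N]}\rho(2^i)\big)$ together with the central-limit scaling $N^{q/2}$. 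The genuinely $q$-th order part is handled by interpolating (Riesz--Thorin) between the $\mathbb L^2$ correlation bound and the trivial $\mathbb L^\infty$ bound, which produces the factor $\rho^{2/q}(2^i)$ and, no cancellation being available there, the linear scaling $N\norm{\bar Y_1}_q^q$; this is the usual Rosenthal dichotomy. The maximum over $1\leqslant i\leqslant N$ is incorporated at every block level by a M\'oricz-type maximal inequality, in the spirit of the Serfling argument already invoked in Remark~\ref{remark_moment_bound}, so that no spurious logarithmic factor in $N$ appears.

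The main obstacle is the bookkeeping inside this recursion: one must propagate the two mixing-dependent constants separately, so as to obtain precisely $\exp\big(K\sum_{i=0}^{[\log N]}\rho(2^i)\big)$ multiplying the $\mathbb L^2$ term and $\exp\big(K\sum_{i=0}^{[\log N]}\rho^{2/q}(2^i)\big)$ multiplying the $\mathbb L^q$ term, while at the same time controlling the maximum rather than a single partial sum and keeping $K$ dependent only on $q$ and on the whole sequence $(\rho(n))_{n\geqslant 1}$ (uniformly in $N$ and $x$) throughout the iteration. This delicate tracking is exactly the content of Shao's Theorem~1.2 in \cite{MR1334179}, which I would follow.
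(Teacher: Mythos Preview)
The paper does not prove this theorem; it is stated in the Appendix as a quotation of Theorem~1.2 of Shao \cite{MR1334179}, with no argument given. Your sketch is a faithful outline of Shao's own proof: truncate at level $A$, use the union bound for the large part, absorb the centering shift via \eqref{condition_on_A}, apply Markov's inequality at exponent $q$, and reduce everything to a Rosenthal-type maximal moment inequality established by a dyadic blocking recursion in which the $\mathbb L^2$ covariance bound yields the factor $\exp\big(K\sum_i\rho(2^i)\big)$ and interpolation yields the factor $\exp\big(K\sum_i\rho^{2/q}(2^i)\big)$. There is therefore nothing in the paper to compare your argument against---your proposal simply supplies, in outline, what the paper deliberately delegates to the citation.
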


 \end{appendices}
 
\def\polhk\#1{\setbox0=\hbox{\#1}{{\o}oalign{\hidewidth
  \lower1.5ex\hbox{`}\hidewidth\crcr\unhbox0}}}\def\cprime{$'$}
\providecommand{\bysame}{\leavevmode\hbox to3em{\hrulefill}\thinspace}
\providecommand{\MR}{\relax\ifhmode\unskip\space\fi MR }
\providecommand{\MRhref}[2]{%
  \href{http://www.ams.org/mathscinet-getitem?mr=#1}{#2}
}
\providecommand{\href}[2]{#2}

\end{document}